\documentclass{article}
\usepackage{graphicx}
\usepackage{bigints}
\graphicspath{Fig/}

\usepackage[utf8]{inputenc}
\usepackage[margin=1in]{geometry}
\usepackage{color}
\usepackage{enumitem}
\usepackage{authblk}
\usepackage{esint}
\usepackage{lineno}

\usepackage{hyperref}
\usepackage{hypcap}

\usepackage[ruled,vlined]{algorithm2e}
\usepackage{algorithmic}

\usepackage{float}
\usepackage{caption}
\usepackage{subcaption}
\usepackage{setspace}
\usepackage{tcolorbox}

\usepackage{amsmath,amsfonts,amssymb,mathtools, amsthm}
\usepackage[english]{babel}
\newtheorem{prop}{Proposition}
\newtheorem{theorem}{Theorem}
\newtheorem{defi}{Definition}
\newtheorem{coro}{Corollary}
\newtheorem{lemma}{Lemma}
\newtheorem{remark}{Remark}
\numberwithin{equation}{section}

\DeclareMathOperator*{\argmin}{argmin}

\usepackage{authblk}

\title{The consecutive lifting-projection flow as an approximation of Boltzmann and Landau flow}

\author[1]{Kun Huang}

\affil[1]{Department of Mathematics, Virginia Tech}
\date{}

\begin{document}
\maketitle

\begin{abstract}
    We introduce the consecutive lifting–projection (LP) flow as a novel approximation framework for the spatially homogeneous Boltzmann and Landau equations. The key idea is to lift the nonlinear collision operator to a higher dimensional linear Kac master equation on spheres, evolve this lifted equation in time, and project the solution back to the lower dimensional velocity space. The resulting LP flow is a tangent flow to the original kinetic dynamics and admits a clear semigroup structure.
    
    We show that the consecutive LP flow preserves mass, momentum, and energy, satisfies an entropy dissipation property, and converges to the correct Maxwellian equilibrium. In addition, the lifting removes the intrinsic nonlinearity of the collision operator and enables explicit analytical representations of the solution. For Maxwell molecules, we provide an $L^1$ error estimate quantifying the accuracy over finite time intervals.

    The framework provides a concise and general methodology for constructing reliable numerical solvers in kinetic theory. It unifies existing explicit discretizations, which helps understanding numerical stability and clarifying the trade-off between conservation and positivity. More importantly, it enables the development of new schemes. In particular, we propose the Green's function method, which is not only unconditionally stable, but also perfectly compatible with fast spectral discretizations.
\end{abstract}

\section{Introduction}
Boltzmann and Landau equations have been the keystone in kinetic theory. There have been two classes of numerical methods for solving these equations: the deterministic and the stochastic. Stochastic methods are easy to implement but suffers from slow convergence and noise. We will focus on the deterministic ones. They can be further categorized into spectral methods \cite{gamba2009spectral, gamba2017fast, pareschi2000fast, zhang2017conservative}, finite difference methods \cite{TAITANO2015357, shiroto2019structure}, Galerkin methods \cite{gamba2018galerkin}, particle methods \cite{carrillo2020particle,bailo2024collisional}, etc. Error analysis is hard, mostly due to the nonlinearity of the collisional operators. We have error analysis of spectral methods for Boltzmann equations \cite{filbet2011analysis,alonso2018convergence, hu2021new}, but for Landau equations there is no published work yet. 

This paper is inspired by the recent work of Guillen and Silvestre \cite{guillen2025landau}. They proposed a lifted version of Landau collision operator and were able to prove the decay of Fisher information using that form, which finally leads to regularity of the solution. Their paper \cite{guillen2025landau} is on pure analysis. However, we noticed that the lifted Landau operator not only inherits all the information of the Landau collision operator, but is also linear, so it will probably lead to better numerical methods. And in this paper we will demonstrate why that is possible and how that can be achieved.

We introduce the lifting-projection flow as an approximation of the Landau/Boltzmann flow and estimate the error. The governing equation for the lifting-projection flow is a linear master equation on spheres, therefore it is much easier to do numerical analysis and develop numerical methods for it. 

We propose a family of numerical schemes based on the lifting-projection technique.

The proposed framework brings new possibilities in two aspects: analysis of existing schemes and development of new schemes.

This lifting-projection framework also provides a new perspective which helps understanding the role of positivity in numerical solutions. We will show that when positivity contradicts conservation, it is reasonable to sacrifice positivity.

The paper will be organized as follows. In Section \ref{sec:tangent_flow} we introduce the concept of tangent flow and lifting-projection (LP) flow. Then in Section \ref{sec:semigroup} we derive the analytical representation of LP flow based on Green's function and estimate the error. A family of numerical schemes will be proposed in Section \ref{sec:scheme}. The conclusions in Section \ref{sec:tangent_flow} will be verified numerically in Section \ref{sec:verify}. Finally in Section \ref{sec:conclusion} we summarize and provide a road map for future works.

\section{Lifting-projection flow: a special tangent flow of Landau and Boltzmann equations}\label{sec:tangent_flow}

In some function space $V$, given a time-independent map $q:V\rightarrow V$ and an initial condition $f^{(0)} \in V$, the equation
\begin{equation*}
    \partial_{t}f = qf,
\end{equation*}
renders a flow $(t,f(t))$ in $V$. 

The goal of this section is to find another flow $(t,\widetilde{f}(t))$ that approximates $(t,f(t))$, but is easier to calculate.

\subsection{Tangent flows}

\begin{defi}[tangent flow]
    Let $(t, f(t))$ be the flow given by $\partial_{t} f = qf$ with initial condition $f^{(0)} \in V$. 
    If there is another function $\widetilde{f}(t)$ such that
    \begin{equation*}
        \begin{split}
            f(\cdot, t_{0}) &= \widetilde{f}(\cdot, t_{0}),\\
            \partial_{t} f(\cdot, t_{0}) &= \partial_{t}\widetilde{f}(\cdot, t_{0}),\\
        \end{split}
    \end{equation*}
    then $(t,\widetilde{f}(t))$ is a tangent flow of $q$ at $(t_{0},f(t_{0}))$.
\end{defi}

\begin{prop}[local error of tangent flow]\label{prop:error_tangent}
    Let $E$ be the error of tangent flow, i.e. $E \coloneqq \widetilde{f} - f$. For any $T > 0$ we have
    \begin{equation}\label{eq:tangent_error}
        \left\lVert E(\cdot, t_{0} + T)\right\rVert_{V} \leq \frac{1}{2} \left( \left\lVert \partial_{tt} f\right\rVert_{L^{\infty}(V,(t_{0}, t_{0} + T))} + \left\lVert \partial_{tt} \widetilde{f} \right\rVert_{L^{\infty}(V,(t_{0}, t_{0} + T))} \right) T^{2}
    \end{equation}
\end{prop}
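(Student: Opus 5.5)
The approach is a second-order Taylor expansion of both flows about $t_{0}$, with remainders in integral form, so that everything is expressed through the $V$-valued second time derivatives. By the definition of tangent flow, $E(\cdot,t_{0}) = 0$ and $\partial_{t}E(\cdot,t_{0}) = 0$. Hence the zeroth- and first-order Taylor terms of $E$ at $t_{0}$ vanish, and only the second-order remainder survives.

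The first step is to write, for $t \in [t_{0}, t_{0}+T]$,
\begin{equation*}
    E(\cdot,t_{0}+T) = E(\cdot,t_{0}) + T\,\partial_{t}E(\cdot,t_{0}) + \int_{t_{0}}^{t_{0}+T} (t_{0}+T-s)\,\partial_{tt}E(\cdot,s)\,ds .
\end{equation*}
This is the Taylor formula with integral remainder for a $V$-valued function; it is obtained by integrating twice via the fundamental theorem of calculus in the Bochner sense. The first two terms vanish by the tangency conditions, leaving only the integral.

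The second step is to take the $V$-norm and use the triangle inequality for Bochner integrals together with $\partial_{tt}E = \partial_{tt}\widetilde{f} - \partial_{tt}f$. This gives
\begin{equation*}
    \lVert E(\cdot,t_{0}+T)\rVert_{V} \le \left(\lVert \partial_{tt} f\rVert_{L^{\infty}(V,(t_{0},t_{0}+T))} + \lVert \partial_{tt}\widetilde{f}\rVert_{L^{\infty}(V,(t_{0},t_{0}+T))}\right)\int_{t_{0}}^{t_{0}+T}(t_{0}+T-s)\,ds .
\end{equation*}
The last integral equals $T^{2}/2$, which yields \eqref{eq:tangent_error}.

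The only real issue is regularity. We need $f$ and $\widetilde{f}$ to be $C^{1}$ in time with values in $V$, and $\partial_{tt}$ to exist, at least weakly, as an essentially bounded $V$-valued function on $(t_{0},t_{0}+T)$, so that the integral remainder formula holds. I would state this as a standing assumption, implicit in the finiteness of the right-hand side. If the norms on the right are infinite, the estimate is trivial.

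If one prefers to avoid vector-valued integration, there is an alternative. Pair $E(t_{0}+T)$ with a norming functional $\phi \in V^{*}$ of unit norm. Then apply the scalar Taylor theorem, or the mean value form, to $t \mapsto \phi(E(t))$, and bound $|\phi(\partial_{tt}E)|$ by the $L^{\infty}$ norms. This gives the same constant $\tfrac12$.
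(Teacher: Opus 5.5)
Your proposal is correct and follows the paper's proof: the paper uses $E(t_{0})=\partial_{t}E(t_{0})=0$ to write $E(t_{0}+T)$ as the iterated integral $\int_{t_{0}}^{t_{0}+T}\int_{t_{0}}^{\tau}\partial_{tt}E(s)\,ds\,d\tau$, which equals your remainder $\int_{t_{0}}^{t_{0}+T}(t_{0}+T-s)\,\partial_{tt}E(s)\,ds$ by Fubini, then bounds it by $\tfrac12\lVert\partial_{tt}E\rVert_{L^{\infty}}T^{2}$ and finishes with the triangle inequality. Your explicit regularity assumption on $\partial_{tt}f$ and $\partial_{tt}\widetilde{f}$ is a point the paper leaves implicit.
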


\begin{proof}
Note that by definition,
\begin{equation*}
    E(t_{0}) = \partial_{t} E (t_{0}) = 0,
\end{equation*}
therefore the remainder of Taylor expansion can be written in integral form as follows:
    \begin{equation*}
    \begin{split}
        E(t_{0}+T) = &E(t_{0}) + \int_{t_{0}}^{t_{0}+T} \partial_{t} E (\tau) d\tau\\
    =&\int_{t_{0}}^{t_{0}+T} \partial_{t} E (\tau) d\tau\\
    =&\int_{t_{0}}^{t_{0}+T} \left[\partial_{t}E(t_{0}) + \int_{t_{0}}^{\tau} \partial_{tt} E (s)ds\right]d\tau\\
    =&\int_{t_{0}}^{t_{0}+T} \left[\int_{t_{0}}^{\tau} \partial_{tt} E (s)ds\right]d\tau.
    \end{split}
\end{equation*}
Taking its norm, we obtain
\begin{equation*}
    \begin{split}
         \left\lVert E(\cdot, t_{0} + T)\right\rVert_{V} =&  \left\lVert \int_{t_{0}}^{t_{0}+T} \left[\int_{t_{0}}^{\tau} \partial_{tt} E (s)ds\right]d\tau\right\rVert_{V}\\
         \leq& \int_{t_{0}}^{t_{0}+T} \int_{t_{0}}^{\tau} \left\lVert \partial_{tt} E\right\rVert_{L^{\infty}(V,(t_{0}, t_{0} + T))} ds d\tau \\
         \leq& \frac{1}{2}\left\lVert \partial_{tt} E\right\rVert_{L^{\infty}(V,(t_{0}, t_{0} + T))} T^{2}\\
         \leq &\frac{1}{2} \left( \left\lVert \partial_{tt} f\right\rVert_{L^{\infty}(V,(t_{0}, t_{0} + T))} + \left\lVert \partial_{tt} \widetilde{f} \right\rVert_{L^{\infty}(V,(t_{0}, t_{0} + T))} \right) T^{2}
    \end{split}
\end{equation*}
\end{proof}

\begin{defi}[consecutive tangent flow]
    Let $(t, f(t))$ be the flow given by $\partial_{t} f = qf$ with initial condition $f^{(0)}$. 

    Suppose that $(t,g_{0}(t))$ is a tangent flow of $q$ at $(0,f^{(0)})$, we denote $g_{0}(\Delta t)$ as $\widetilde{f}^{(1)}$. Now for $n\geq 1$, suppose that $(t,g_{n}(t))$ is a tangent flow of $q$ at $(n\Delta t, \widetilde{f}^{(n)})$, we denote $g_{n}(n\Delta t + \Delta t)$ as $\widetilde{f}^{(n+1)}$.

    The consecutive tangent flow $(t,\widetilde{f}(t))$ is defined as follows:
    \begin{equation}
        \widetilde{f}(t) = g_{n}(t), \ \forall\ t\in(n\Delta t, (n+1) \Delta t),
    \end{equation}
    which is the red, green and purple curve shown in Figure \ref{fig:consec_tangent}.
\end{defi}

\begin{figure}[htbp!]
    \centering
    \includegraphics[width=0.8\textwidth]{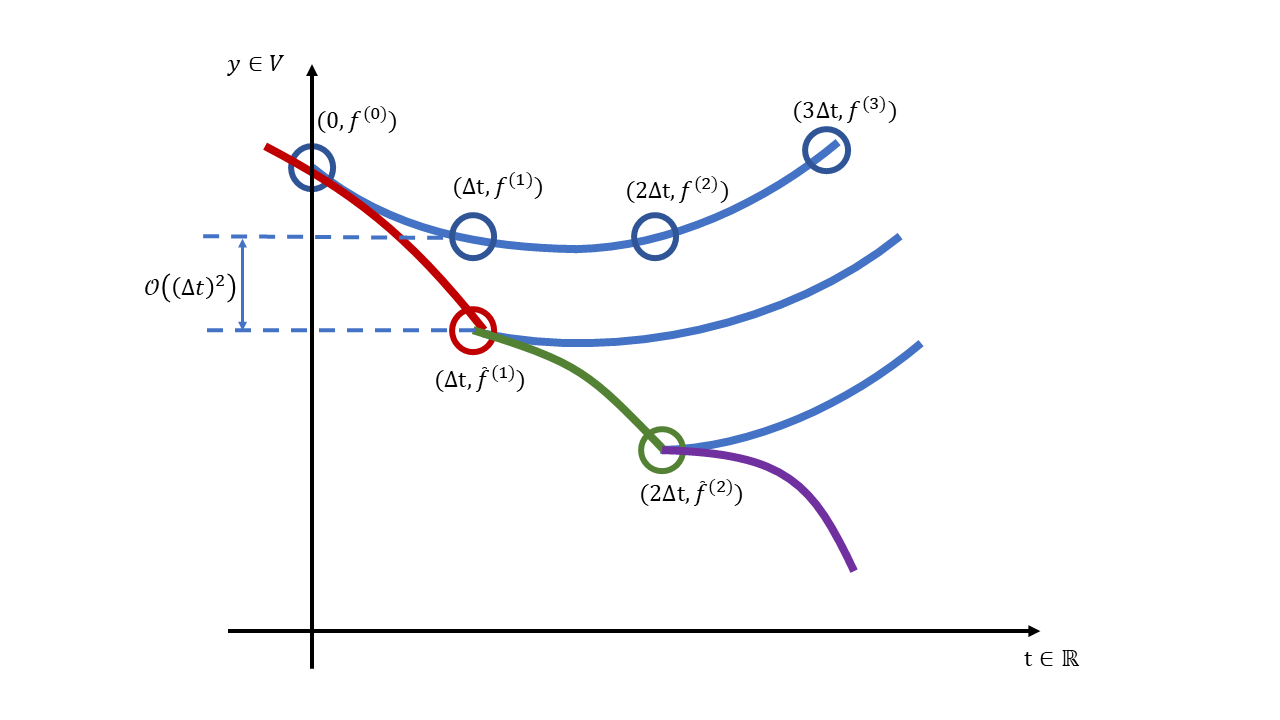}
    \caption{Consecutive tangent flow. The blue curves are flows rendered by the original equation $\partial_{t}f=qf$ with different initial conditions.}
    \label{fig:consec_tangent}
\end{figure}

As is shown in Figure \ref{fig:nonuni_tangent}, tangent flows are not unique, most of them would not make the problem easier to solve. There exists a special one that reveals the structure of Landau/Boltzmann equation: the lifting-projection (LP) flow.

\begin{figure}[htbp!]
    \centering
    \includegraphics[width=0.8\textwidth]{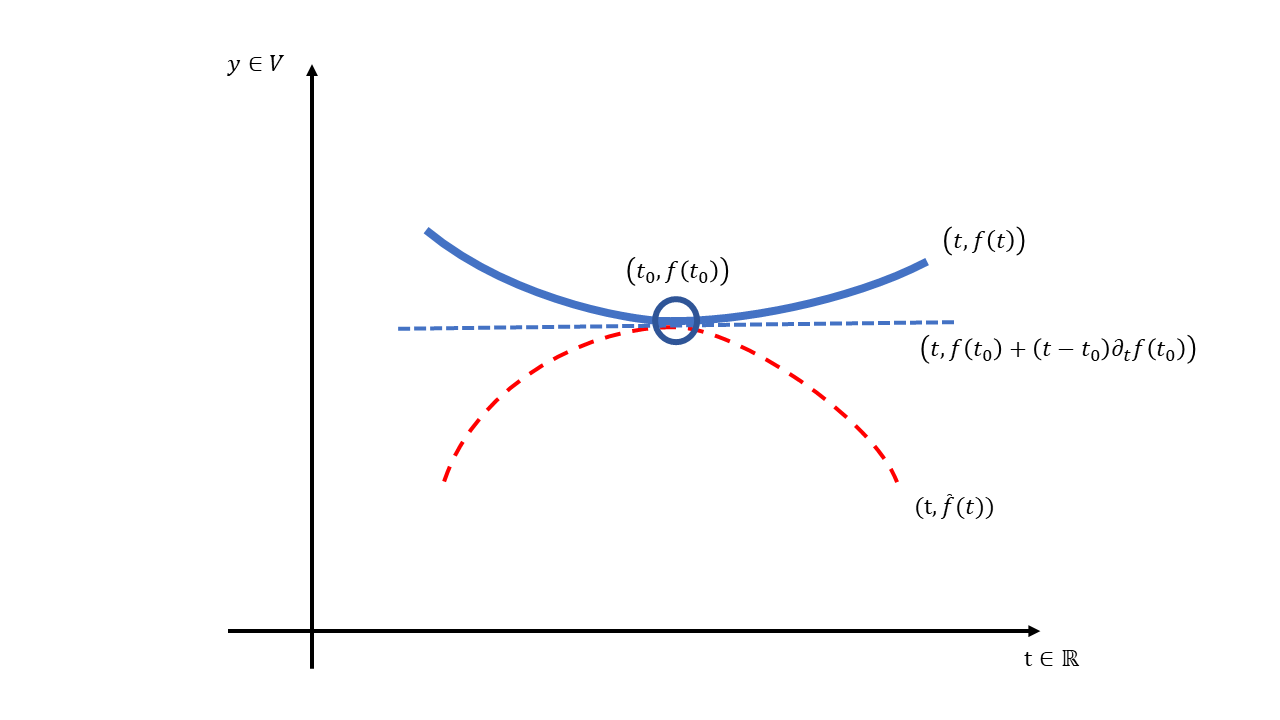}
    \caption{Non-uniqueness of tangent flows. In particular, the forward Euler flow (the blue dashed line) is a tangent flow.}
    \label{fig:nonuni_tangent}
\end{figure}

\subsection{The lifting-projection (LP) flow}

For a function $F = F(\mathbf{v},\mathbf{w})$ defined on $\mathbb{R}^d \times \mathbb{R}^d$,  we define the lifted collision operator $Q$ by
\begin{equation}\label{eq:Q_def}
    Q F \coloneqq \begin{cases}
        (\nabla_{v} - \nabla_{w})\cdot \left(S(\mathbf{v}-\mathbf{w}) \cdot (\nabla_{v} - \nabla_{w})F\right), &\text{Landau}\\[10pt]
        \int_{\mathbb{S}^{2}} B(\mathbf{v}-\mathbf{w},\mathbf{\zeta}) \left(F(\mathbf{v}',\mathbf{w}') - F(\mathbf{v},\mathbf{w})\right)d\mathbf{\zeta}, &\text{Boltzmann}
    \end{cases}
\end{equation}
where the matrix $S(\mathbf{u})$ is given by
\begin{equation*}
    S(\mathbf{u}) =|\mathbf{u}|^{\gamma} \left((\mathbf{u}^{T}\mathbf{u})\mathbb{I} - \mathbf{u} \mathbf{u}^{T}\right),
\end{equation*}
the Boltzmann collision kernel is
\begin{equation*}
    B(\mathbf{u},\zeta) = |\mathbf{u}|^{\gamma}b(\widehat{\mathbf{u}}\cdot\mathbf{\zeta}),
\end{equation*}
the pre-collision velocities are
\begin{equation*}
    \begin{cases}
        \mathbf{v}' =& \frac{1}{2}\left(\mathbf{v} + \mathbf{w} + |\mathbf{v} - \mathbf{w}| \mathbf{\zeta}\right)\\
            \mathbf{w}' =& \frac{1}{2}\left(\mathbf{v} + \mathbf{w} - |\mathbf{v} - \mathbf{w}| \mathbf{\zeta}\right).
    \end{cases}
\end{equation*}

The weak form of lifted operator $Q$ is
\begin{equation*}
    \left(QF, \Psi(\mathbf{v},\mathbf{w})\right) = \begin{cases}
        -\iint_{vw} S:((\nabla_v-\nabla_w)F\otimes(\nabla_v-\nabla_w)\Psi),&\text{Landau}\\[10pt]
       -\frac{1}{2}\iint_{vw}\int_{\mathbb{S}^{2}} B(\mathbf{v}-\mathbf{w},\mathbf{\zeta}) (F'-F)\left(\Psi' - \Psi \right)d\mathbf{\zeta},&\text{Boltzmann}
    \end{cases}
\end{equation*}

Define the projection operator $\pi$ as
\begin{equation*}
    \pi g(\mathbf{v}) =\int_{w} g(\mathbf{v}, \mathbf{w})
\end{equation*}
and denote the outer product of two functions as $f\otimes g (\mathbf{v}, \mathbf{w})\coloneqq f(\mathbf{v})g(\mathbf{w})$, then the collision operator $q(f)$ satisfies
\begin{equation}\label{eq:Q_q}
   \pi Q(f\otimes f) = q(f) \coloneqq  \begin{cases}
        \int_{w}  \nabla_{v} \cdot \left(S(\mathbf{v}-\mathbf{w}) \cdot (\nabla_{v} - \nabla_{w})f(\mathbf{v})f(\mathbf{w})\right),&\text{Landau}\\[10pt]
        \int_{w} \int_{\mathbb{S}^{2}} B(\mathbf{v}-\mathbf{w},\mathbf{\zeta}) \left(f(\mathbf{v}')f(\mathbf{w}') - f(\mathbf{v})f(\mathbf{w})\right)d\mathbf{\zeta},&\text{Boltzmann}
    \end{cases}
\end{equation}

\bigskip

\begin{lemma}[lifting-projection flow]
On the time interval $(t_{0}, t_{0}+T)$, solve the lifted equation
\begin{equation}\label{eq:2-kac}
    \partial_{t} F = Q F
\end{equation}
with initial condition $\left.F(\mathbf{v},\mathbf{w},t)\right|_{t=t_{0}} =\left.f(v,t)\right|_{t=t_{0}}\left.f(w,t)\right|_{t=t_{0}}$.

Then the projection of the lifted solution $(t, \pi F (\cdot, t))$ is a tangent flow of collision operator $q$ at $(t_{0}, f(\cdot, t_{0}))$. We call it the lifting-projection(LP) flow of $q$.
\end{lemma}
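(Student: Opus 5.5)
We must check the two conditions in the definition of a tangent flow at $t_{0}$: that $\pi F(\cdot,t_{0}) = f(\cdot,t_{0})$, and that $\partial_{t}\pi F(\cdot,t_{0}) = \partial_{t} f(\cdot,t_{0}) = q(f(\cdot,t_{0}))$. Throughout we assume $f(\cdot,t_{0})$ is a probability density (mass one, the normalization implicit in \eqref{eq:Q_q}). We also assume enough decay and regularity that the lifted equation \eqref{eq:2-kac} has a solution $F$ which is $C^{1}$ in time with values in $L^{1}(\mathbb{R}^{d}\times\mathbb{R}^{d})$. This lets integrals in $\mathbf{w}$ commute with $\partial_{t}$.

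\textbf{Step 1: matching values.} At $t=t_{0}$ we have $F = f\otimes f$ with $f=f(\cdot,t_{0})$. Hence
\begin{equation*}
\pi F(\mathbf{v},t_{0}) = \int_{w} f(\mathbf{v})f(\mathbf{w})\,d\mathbf{w} = f(\mathbf{v})\int_{w} f(\mathbf{w})\,d\mathbf{w} = f(\mathbf{v}),
\end{equation*}
using unit mass. If the mass is some $\rho\neq 1$, one instead works with $f\otimes f/\rho$, or equivalently normalizes $q$. We note this as the convention in which \eqref{eq:Q_q} holds.

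\textbf{Step 2: matching derivatives.} Since $\pi$ is linear and independent of time, differentiating under the integral gives
\begin{equation*}
\partial_{t}\pi F(\cdot,t_{0}) = \pi\,\partial_{t}F(\cdot,t_{0}) = \pi Q F(\cdot,t_{0}) = \pi Q(f\otimes f).
\end{equation*}
By the identity \eqref{eq:Q_q}, the right-hand side equals $q(f(\cdot,t_{0})) = \partial_{t} f(\cdot,t_{0})$, which is exactly the second condition.

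\textbf{Step 3: verifying \eqref{eq:Q_q}.} The main obstacle is to confirm \eqref{eq:Q_q} carefully rather than take it on faith, and to justify the interchange in Step 2.

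\emph{Landau case.} Expand $(\nabla_{v}-\nabla_{w})\cdot(\cdots)$. The $-\nabla_{w}\cdot$ part is a total divergence in $\mathbf{w}$, so it integrates to zero provided $S(\mathbf{v}-\mathbf{w})(\nabla_{v}-\nabla_{w})(f\otimes f)$ decays in $\mathbf{w}$. Since $S$ grows like $|\mathbf{u}|^{\gamma+2}$, this needs moments and decay of $f$ and $\nabla f$. What remains is the $\nabla_{v}\cdot$ part, which is precisely the stated formula for $q$.

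\emph{Boltzmann case.} Integrating $QF$ in $\mathbf{w}$ reproduces the gain-minus-loss form directly. The only point to check is integrability of $B\,|f'f'_{*}-ff_{*}|$. This requires conditions on the angular kernel $b$: under Grad's cutoff the gain and loss terms can be split, and without cutoff one uses the cancellation in the weak form.

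Once these technical hypotheses are stated, which is what I would do first, the lemma reduces to the two short computations in Steps 1 and 2.
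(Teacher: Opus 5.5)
Your proof is correct and is the paper's argument. The paper's entire proof is ``use Equation~\eqref{eq:Q_q}'', which is your Step~2, and your Step~1 makes explicit the unit-mass normalization $\int f(\cdot,t_{0})=1$ needed for $\pi(f\otimes f)=f$, which the paper leaves unstated. One correction to your aside on general mass $m=\int f(\cdot,t_{0})\neq 1$: since $q$ is quadratic, the datum $f\otimes f/m$ alone gives $\partial_{t}\pi F(t_{0})=q(f)/m$, so it must be combined with replacing $Q$ by $mQ$ (i.e.\ rescaling time) rather than being an alternative to normalizing $q$.
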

\begin{proof}
    Use Equation (\ref{eq:Q_q}).
\end{proof}

\begin{remark}
    Equation (\ref{eq:2-kac}) is called the 2-particle Kac master equation in literature.
\end{remark}

\begin{defi}[Consecutive LP flow]
    On the time interval $[n\Delta t, (n+1)\Delta t)$, let $F$ be the solution to the lifted equation 
    \begin{equation*}
    \partial_{t} F = Q F,
    \end{equation*}
    with initial condition $\left.F(\mathbf{v},\mathbf{w},t)\right|_{t=n\Delta t} =\widetilde{f}^{(n)}(v)\widetilde{f}^{(n)}(w)$. 
    
    Denote $\pi F((n+1)\Delta t)$ as $\widetilde{f}^{(n+1)}$, we will use it to construct the initial condition for the next time interval.

    The consecutive 2-particle Kac flow is defined as
    \begin{equation*}
        F_{K}(t) = F(t), \ \forall\ t\in[n\Delta t, (n+1) \Delta t).
    \end{equation*}
    
    The consecutive LP flow $(t,\widetilde{f}(t))$ is defined as follows:
    \begin{equation}
        \widetilde{f}(t) = \pi F(t), \ \forall\ t\in[n\Delta t, (n+1) \Delta t).
    \end{equation}
\end{defi}

\begin{remark}
    The consecutive LP flow is continuous in time. However, the consecutive 2-particle Kac flow is not continuous in time, in fact
    \begin{equation}
        F_{K}(n\Delta t) = \left( \lim_{t\rightarrow (n\Delta t)^{-}}\pi F_{K}(t)\right) \otimes \left( \lim_{t\rightarrow (n\Delta t)^{-}}\pi F_{K}(t)\right) \neq \lim_{t\rightarrow (n\Delta t)^{-}} F_{K}(t).
    \end{equation}
\end{remark}

\subsection{A simple example of the LP flow}
To help illustrating the concept of LP flow, we will present a simple example in what follows.

Let operator $q_{s}$ be the loss part of a Boltzmann operator, and denote its lifted version as $Q_{s}$:
\begin{equation}\label{eq:q_loss}
    \begin{split}
        q_{s} f &\coloneqq - \int_{w} f(v) f(w),\\
        Q_{s} F &\coloneqq - F(v ,w).
    \end{split}
\end{equation}

Suppose that we have unit initial mass $\int_{v} f^{(0)}(v) = 1$. The solution to $\partial_{t}f = q_{s} f$ with initial condition $\left.f(v,t)\right|_{t=0} = f^{(0)}(v)$ is
\begin{equation*}
    f(v,t) = \frac{1}{1+t}f^{(0)}(v).
\end{equation*}
And the solution to $\partial_{t} F = Q_{s} F$ with initial condition $\left.F(v,w,t)\right|_{t=0} = f^{(0)}(v)f^{(0)}(w)$ is
\begin{equation*}
    F(v,w,t) = e^{-t}f^{(0)}(v)f^{(0)}(w).
\end{equation*}
By definition, $(t, \pi F)$ is a tangent flow of $q_{s}$ at $(0,f^{(0)}(v))$, note that
\begin{equation*}
    \pi F (v,t)= e^{-t} f^{(0)}(v) \neq f(v,t).
\end{equation*}

\begin{remark}
    This is just a toy model since neither $q_{s}$ or $Q_{s}$ preserve the mass of solution, indeed
    \begin{align*}
        \int_{v} f(v,t) &= \frac{1}{1+t} <1, &\ \forall \  t>0;\\
            \iint_{vw} F(v,w,t) &= e^{-t}<1,&\ \forall \  t>0.
    \end{align*}
    
\end{remark}

\subsection{Properties of the lifting-projection flow} \label{subsec:properties}

The key advantage of the lifting-projection flow is that its governing equation is a linear equation. But that property alone does not guarantee a good numerical method. In this subsection, we will find out more.

Define $\mathbf{z} = \mathbf{v}+\mathbf{w}$ and $\mathbf{u} = \mathbf{v}-\mathbf{w}$, any function $G(\mathbf{v},\mathbf{w})$ can also be expressed as
\begin{equation*}
    G(\mathbf{z},\mathbf{u}) = G(\frac{1}{2}\mathbf{z}+\frac{1}{2}\mathbf{u},\frac{1}{2}\mathbf{z}-\frac{1}{2}\mathbf{u}).
\end{equation*}
If $G(\mathbf{v},\mathbf{w}) \leq e^{-v^{2}-w^{2}}$ for any $(\mathbf{v},\mathbf{w}) \in \mathbb{R}^{2d}$ then $G(\mathbf{z},\mathbf{u}) \leq e^{-z^{2}/2-u^{2}/2}$ for any $(\mathbf{z},\mathbf{u}) \in \mathbb{R}^{2d}$.

Define $\rho = |\mathbf{u}|$ and $\sigma = \mathbf{u}/|\mathbf{u}|$, for Landau collision operators we have \cite{guillen2025landau}:
\begin{equation}\label{eq:Q_sphere_diff}
    Q F = 4\rho^{\gamma}\Delta_{\sigma} F(\rho, \sigma;\mathbf{z},t)
\end{equation}
where $\Delta_{\sigma}$ is the Laplace-Beltrami operator with respect to $\sigma$ on $\mathbb{S}^{d-1}$. 

Indeed, the lifted Landau operator is
\begin{equation*}
    Q F(\mathbf{u}, \mathbf{z}) = 4|\mathbf{u}|^{\gamma}\nabla_{u}\cdot\left(\left((\mathbf{u}^{T}\mathbf{u})\mathbb{I}-\mathbf{u}\mathbf{u}^{T}\right)\cdot\nabla_{u}F\right)
\end{equation*}

When $d=2$, using polar coordinates $(\rho, \phi)$ for $\mathbf{u}$, 
\begin{equation*}
    \begin{split}
        \frac{1}{4}QF(\rho,\phi, \mathbf{z}) =& |\mathbf{u}|^{\gamma}\nabla_{u}\cdot\left(\left((\mathbf{u}^{T}\mathbf{u})\mathbb{I}-\mathbf{u}\mathbf{u}^{T}\right)\cdot\nabla_{u}F\right)\\
        =& \rho^{\gamma}\nabla_{u}\cdot\left(\rho^{2}\left(\mathbb{I}-\hat{\rho}\hat{\rho}^{T}\right)\cdot\left((\partial_{\rho}F)\hat{\rho} + \frac{1}{\rho}(\partial_{\phi}F)\hat{\phi}\right)\right)\\
        =&\rho^{\gamma}\nabla_{u}\cdot\left(\rho^{2}\left(\frac{1}{\rho}(\partial_{\phi}F)\hat{\phi}\right)\right)\\
        =&\rho^{\gamma}\nabla_{u}\cdot\left(\rho(\partial_{\phi}F)\hat{\phi}\right)\\
        =&\rho^{\gamma}\frac{1}{\rho}\partial_{\phi}(\rho\partial_{\phi}F)\\
        =&\rho^{\gamma}\partial_{\phi}^{2}F.
    \end{split}
\end{equation*}
Recall that
\begin{equation*}
    \Delta_{u} F =  \frac{1}{\rho}\partial_{\rho}(\rho \partial_{\rho}F) + \frac{1}{\rho^{2}}\partial_{\phi\phi}F,
\end{equation*}
hence we have
\begin{equation*}
    \partial_{\phi}^{2}F = \rho^{2}\Delta_{u} F - \rho\partial_{\rho}(\rho\partial_{\rho}F).
\end{equation*}
Now we can see that the operator $Q$ is a homogeneous diffusion operator on $\theta$ for any fixed $(\rho,\mathbf{z})$, with diffusion coefficient being $\rho^{\gamma}$.

\bigskip
When $d = 3$, using spherical coordinates $(\rho, \theta, \phi)$ for $\mathbf{u}$,
\begin{equation*}
    \begin{split}
        \frac{1}{4}QF(\rho, \theta, \phi,\mathbf{z}) = &|\mathbf{u}|^{\gamma}\nabla_{u}\cdot\left(\left((\mathbf{u}^{T}\mathbf{u})\mathbb{I}-\mathbf{u}\mathbf{u}^{T}\right)\cdot\nabla_{u}F\right)\\
        =&\rho^{\gamma}\nabla_{u}\cdot\left(\rho^{2}\left(\mathbb{I}-\hat{\rho}\hat{\rho}^{T}\right)\cdot\left((\partial_{\rho}F)\hat{\rho} + \frac{1}{\rho}(\partial_{\theta}F)\hat{\theta} + \frac{1}{\rho \sin{\theta}}(\partial_{\phi}F)\hat{\phi}\right)\right)\\
        =&\rho^{\gamma}\nabla_{u}\cdot\left(\rho^{2}\left( \frac{1}{\rho}(\partial_{\theta}F)\hat{\theta} + \frac{1}{\rho \sin{\theta}}(\partial_{\phi}F)\hat{\phi}\right)\right)\\
        =&\rho^{\gamma}\nabla_{u}\cdot\left(\rho\left((\partial_{\theta}F)\hat{\theta} + \frac{1}{\sin{\theta}}(\partial_{\phi}F)\hat{\phi}\right)\right)\\
        =&\rho^{\gamma}\left( \frac{1}{\rho \sin{\theta}}\partial_{\theta}(\rho(\partial_{\theta}F)\sin{\theta}) + \frac{1}{\rho \sin{\theta}}\partial_{\phi} (\rho\frac{1}{\sin{\theta}}(\partial_{\phi}F))\right)\\
        =&\rho^{\gamma}\left( \frac{1}{ \sin{\theta}}\partial_{\theta}((\partial_{\theta}F)\sin{\theta}) + \frac{1}{\sin^{2}{\theta}}\partial_{\phi}^{2} F\right)\\
        =&\rho^{\gamma}\Delta_{\sigma}F.
    \end{split}
\end{equation*}

For Boltzmann collision operator we have
\begin{equation*}
    Q F=\rho^{\gamma} J_{\sigma}F(\sigma, \rho;\mathbf{z},t) .
\end{equation*}
where
\begin{equation*}
    J_{\sigma} F \coloneqq \int_{\mathbb{S}^{d-1}} b(\zeta\cdot\sigma) F(\zeta, \rho;\mathbf{z},t) d\zeta- \left(\int_{\mathbb{S}^{d-1}} b(\zeta\cdot\sigma) d\zeta\right) F(\sigma, \rho;\mathbf{z},t)
\end{equation*}

For detailed derivation, see Appendix.

\bigskip
The $Q$-flow does not lead to Gaussian equilibrium, therefore we know that $\pi F \neq f$ when $t>0$. Nevertheless, it turns out that $\pi F$ has the same conservation laws as $f$.
\begin{prop}[LP flow preserves conservation laws]\label{prop:conservation}
    Take test functions $\psi(\mathbf{v}) \in \left\{1,\mathbf{v},|\mathbf{v}|^{2}\right\}$,  we have
    \begin{equation*}
        (\pi F( T), \psi) = (f^{(0)}, \psi)
    \end{equation*}
    for any $T \in \mathbb{R}^{+}$.
\end{prop}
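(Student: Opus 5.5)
The argument moves the test function to the lifted space and uses the weak form of $Q$. For $\psi(\mathbf{v})$ a function of $\mathbf{v}$ only, Fubini gives $(\pi F(T),\psi) = \iint_{vw} F(\mathbf{v},\mathbf{w},T)\,\psi(\mathbf{v})$. So it suffices to show that $\iint F\,\Psi$ is constant in time for suitable lifted test functions $\Psi$, and then to evaluate it at $t=0$, where $F = f^{(0)}\otimes f^{(0)}$. The natural move is to symmetrize. Define
\begin{equation*}
\Psi(\mathbf{v},\mathbf{w}) = \tfrac12\bigl(\psi(\mathbf{v})+\psi(\mathbf{w})\bigr).
\end{equation*}
Then $\iint F\psi(\mathbf{v}) = \iint F\Psi$ provided $F(\cdot,\cdot,t)$ stays symmetric under $\mathbf{v}\leftrightarrow\mathbf{w}$.

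\textbf{Step 1: symmetry is preserved.} The initial datum $f^{(0)}(\mathbf{v})f^{(0)}(\mathbf{w})$ is symmetric. Under the swap $(\mathbf{v},\mathbf{w})\mapsto(\mathbf{w},\mathbf{v})$ we have $\mathbf{u}\mapsto-\mathbf{u}$ and $\mathbf{z}\mapsto\mathbf{z}$. In the Landau case, $S(-\mathbf{u})=S(\mathbf{u})$ and $(\nabla_v-\nabla_w)$ changes sign twice, so $Q$ commutes with the swap. In the Boltzmann case, the swap sends $\zeta\mapsto-\zeta$ together with $\mathbf{v}'\leftrightarrow\mathbf{w}'$, and $b(\widehat{-\mathbf{u}}\cdot(-\zeta))=b(\hat{\mathbf{u}}\cdot\zeta)$, so $Q$ again commutes with the swap. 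By uniqueness for the linear equation \eqref{eq:2-kac}, $F(t)$ remains symmetric.

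\textbf{Step 2: collision invariants.} I will show $(QF,\Psi)=0$ for $\Psi=\psi(\mathbf{v})+\psi(\mathbf{w})$ with $\psi\in\{1,\mathbf{v},|\mathbf{v}|^2\}$, using the weak forms.
\begin{itemize}
\item \emph{Landau.} For $\psi=1$ the gradient $(\nabla_v-\nabla_w)\Psi$ vanishes. For $\psi=v_i$ it is a constant vector $2e_i$, which is not zero, so the flux route fails. Instead I will work in $(\mathbf{z},\mathbf{u})$ coordinates and use the representation \eqref{eq:Q_sphere_diff}, $QF=4\rho^\gamma\Delta_\sigma F$, which acts only on the angular variable $\sigma$ with $\rho$ and $\mathbf{z}$ fixed. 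The invariant $\psi(\mathbf{v})+\psi(\mathbf{w})$ equals $2$, $\mathbf{z}$, or $\tfrac12(|\mathbf{z}|^2+\rho^2)$, all of which are independent of $\sigma$. Integrating by parts on $\mathbb{S}^{d-1}$ then gives $\int_{\mathbb{S}^{d-1}}\Delta_\sigma F\cdot g(\rho,\mathbf{z})\,d\sigma=0$.
\item \emph{Boltzmann.} The weak form contains the factor $\Psi'-\Psi$. Since $\mathbf{v}'+\mathbf{w}'=\mathbf{z}$ and $|\mathbf{v}'|^2+|\mathbf{w}'|^2=|\mathbf{v}|^2+|\mathbf{w}|^2$, this factor is zero. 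The same conclusion also follows from $QF=\rho^\gamma J_\sigma F$, which again acts only on $\sigma$.
\end{itemize}
So I will use the spherical representations in both cases, because then the argument is uniform: $Q$ is an operator in $\sigma$ alone, it annihilates constants in $\sigma$, and it is self-adjoint on the sphere.

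\textbf{Step 3: conclude.} From Steps 1 and 2,
\begin{equation*}
\frac{d}{dt}\iint F\Psi=(QF,\Psi)=0,
\end{equation*}
which gives
\begin{equation*}
(\pi F(T),\psi)=\iint F(T)\Psi=\iint f^{(0)}\otimes f^{(0)}\,\Psi=(f^{(0)},\psi)\,(f^{(0)},1).
\end{equation*}
This equals $(f^{(0)},\psi)$ under the unit-mass normalization used earlier in the paper; I would state that normalization explicitly. Without it, the identity holds with the factor $(f^{(0)},1)$, which is itself conserved.

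\textbf{Main obstacle.} The delicate part is the Landau momentum and energy case. The weak form in $(\mathbf{v},\mathbf{w})$ does not obviously vanish there, so it has to be handled through the change of variables to $(\mathbf{z},\rho,\sigma)$ and the Jacobian. The Jacobian factor $\rho^{d-1}$ is independent of $\sigma$, so the angular integration by parts remains valid. Justifying the integration by parts also requires enough decay of $F$, for example Gaussian bounds as noted in the paragraph on $(\mathbf{z},\mathbf{u})$ coordinates.
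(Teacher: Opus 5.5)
Your argument is correct and is essentially the paper's own: $F$ stays symmetric under $\mathbf{v}\leftrightarrow\mathbf{w}$, $\bigl(F,\psi(\mathbf{v})+\psi(\mathbf{w})\bigr)$ is constant in time, and you evaluate it at $f^{(0)}\otimes f^{(0)}$; your unit-mass caveat is also accurate, since the paper's final equality silently uses $(f^{(0)},1)=1$. The ``main obstacle'' you identify is not real, however: $(\nabla_v-\nabla_w)(v_i+w_i)=e_i-e_i=0$ (not $2e_i$), and $(\nabla_v-\nabla_w)(|\mathbf{v}|^2+|\mathbf{w}|^2)=2\mathbf{u}$ with $S(\mathbf{u})\mathbf{u}=0$, so the Landau weak form vanishes directly in $(\mathbf{v},\mathbf{w})$ and the detour through $(\mathbf{z},\rho,\sigma)$ coordinates, though valid, is unnecessary.
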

\begin{proof}
    For $\psi(\mathbf{v}) \in \left\{1,\mathbf{v},|\mathbf{v}|^{2}\right\}$, we have
    \begin{equation*}
        \partial_{t}\left(F, \psi(\mathbf{v})+\psi(\mathbf{w})\right) = \left(QF, \psi(\mathbf{v})+\psi(\mathbf{w})\right) = 0,
    \end{equation*}
    hence
    \begin{equation*}
        \left(F(T), \psi(\mathbf{v})+\psi(\mathbf{w})\right) = \left(f^{(0)}\otimes f^{(0)} , \psi(\mathbf{v})+\psi(\mathbf{w})\right).
    \end{equation*}
    Since $QF(\mathbf{v},\mathbf{w}) = QF(\mathbf{w},\mathbf{v})$, we know that
    \begin{equation*}
        F(\mathbf{v},\mathbf{w}, T) = F(\mathbf{w},\mathbf{v}, T),\ \forall T\in \mathbb{R}^{+}.
    \end{equation*}
    Consequently
    \begin{equation*}
        \begin{split}
            \left(\pi F(T), \psi\right) \coloneqq& \int_{v}\left(\int_{w} F(\mathbf{v},\mathbf{w},T)\right)\psi(\mathbf{v}) \\
        = &\iint_{vw}F(\mathbf{v},\mathbf{w},T)\psi(\mathbf{v}) \\
        =& \frac{1}{2}\left(\iint_{vw}F(\mathbf{v},\mathbf{w},T)\psi(\mathbf{v}) + \iint_{vw}F(\mathbf{w},\mathbf{v},T)\psi(\mathbf{w})\right)\\
        =&\frac{1}{2}\left(F(T), \psi(\mathbf{v})+\psi(\mathbf{w})\right) \\
        =&\frac{1}{2}\left(f^{(0)}\otimes f^{(0)} , \psi(\mathbf{v})+\psi(\mathbf{w})\right)\\
        =&\left( f^{(0)}, \psi\right) 
        \end{split}
    \end{equation*}
\end{proof}

\begin{theorem}[H-theorem for consecutive LP flow]\label{thm:entropy}
Define the entropy $H$ of distribution functions as:
\begin{equation*}
    \begin{split}
        H(f(\mathbf{v})) \coloneqq & \int_{v} f(\mathbf{v})\log{f(\mathbf{v})},\\
        H(F(\mathbf{v},\mathbf{w})) \coloneqq & \iint_{vw} F(\mathbf{v},\mathbf{w})\log{F(\mathbf{v},\mathbf{w})}.
    \end{split}
\end{equation*}
    Let $(t, F_{K}(t))$ be the consecutive 2-particle Kac flow, and $(t, \widetilde{f}(t))$ be the consecutive LP flow. 
    
    For any $0\leq t_{1}<t_{2}<+\infty$ we have
    \begin{equation*}
            H(F_{K}(\cdot, t_{2})) \leq H(F_{K}(\cdot, t_{1})).
    \end{equation*}
    In addition, for any $0\leq n < m < \infty$ we have
    \begin{equation*}
        H(\widetilde{f}^{(m)}) \leq H(\widetilde{f}^{(n)}). 
    \end{equation*}
\end{theorem}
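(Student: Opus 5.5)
The proof has two ingredients: monotonicity of $H$ along the linear Kac flow within one time interval, and control of $H$ at the reset times $n\Delta t$, where $F_K$ jumps. Within a single interval $[n\Delta t,(n+1)\Delta t)$, $F_K$ solves the linear equation $\partial_t F = QF$. I will compute
\begin{equation*}
\frac{d}{dt}H(F) = \left(QF, \log F + 1\right) = \left(QF,\log F\right),
\end{equation*}
using that $(QF,1)=0$ by mass conservation (Proposition \ref{prop:conservation}, or directly from the weak form with $\Psi=1$). Then I use the weak form of $Q$ with $\Psi=\log F$.

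For Landau this gives
\begin{equation*}
-\iint_{vw} S:\left((\nabla_v-\nabla_w)F\otimes(\nabla_v-\nabla_w)F\right)/F \le 0,
\end{equation*}
since $S(\mathbf{u})$ is positive semidefinite: $|\mathbf{u}|^{2}\mathbb{I}-\mathbf{u}\mathbf{u}^T$ is a multiple of the projection onto $\mathbf{u}^\perp$. For Boltzmann it gives
\begin{equation*}
-\tfrac12\iint\int B\,(F'-F)(\log F'-\log F)\le 0,
\end{equation*}
by monotonicity of $\log$ and $B\ge 0$. Alternatively, I can use the sphere representation (\ref{eq:Q_sphere_diff}): for fixed $(\rho,\mathbf{z})$, $Q$ is $\rho^\gamma$ times a symmetric Markov generator on $\mathbb{S}^{d-1}$, which dissipates $\int F\log F$. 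Positivity of $F$, needed for $\log F$ to make sense, follows from the maximum principle or Markov structure of these generators. This settles $t_1,t_2$ lying in the same interval.

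At a reset time, $F_K(n\Delta t) = \widetilde f^{(n)}\otimes\widetilde f^{(n)}$, and the left limit $G:=\lim_{t\to n\Delta t^-}F_K(t)$ is a symmetric probability density with both marginals equal to $\widetilde f^{(n)}$; symmetry follows as in the proof of Proposition \ref{prop:conservation}. The key inequality is subadditivity of entropy:
\begin{equation*}
H(\pi G\otimes\pi G) = 2H(\widetilde f^{(n)}) \le H(G).
\end{equation*}
This holds because $H(G)-H(\pi G\otimes \pi G)$ is the relative entropy (mutual information) $\iint G\log\frac{G}{\pi G\otimes\pi G}\ge 0$ by Jensen, where I use that the two marginals coincide to write $\iint G\log(\pi G(v)\pi G(w)) = 2H(\pi G)$. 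Hence $H$ does not increase across the jump either. Chaining the within-interval and across-jump inequalities gives $H(F_K(t_2))\le H(F_K(t_1))$ for all $t_1<t_2$.

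For the discrete statement, $H(F_K(n\Delta t)) = H(\widetilde f^{(n)}\otimes\widetilde f^{(n)}) = 2H(\widetilde f^{(n)})$ because $\widetilde f^{(n)}$ has unit mass. Applying the first part with $t_1=n\Delta t$ and $t_2=m\Delta t$ yields $H(\widetilde f^{(m)})\le H(\widetilde f^{(n)})$.

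The main obstacle is justifying the formal computations: the regularity, positivity and integrability of $F$ needed to differentiate $H$, integrate by parts with $\Psi=\log F$, and define the left limits. I would handle this by assuming smooth positive initial data with Gaussian-type tails, which the linear sphere-diffusion or jump structure propagates for each fixed $(\rho,\mathbf{z})$.
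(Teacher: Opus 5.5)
Your proposal is correct and follows the paper's proof. It has the same two ingredients: monotonicity of $H$ along the linear Kac flow within each interval, and the mutual-information inequality $H(\pi G\otimes\pi G)=2H(\pi G)\le H(G)$ for the symmetric left limit $G$ at each reset time, after which $H(F_K(n\Delta t))=2H(\widetilde f^{(n)})$ gives the discrete statement. You actually justify more than the paper does: it only asserts the within-interval H-theorem and uses unit mass and the symmetry of $G$ without comment, whereas you derive the dissipation from the weak forms of $Q$ with $\Psi=\log F$ and state the unit-mass and equal-marginals assumptions explicitly.
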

\begin{proof}
    Within each interval we know that the 2-particle Kac master equation satisfy the H-theorem:
    \begin{equation*}
        \partial_{t} H(F) \leq 0.
    \end{equation*}
    It suffices to show that
    \begin{equation*}
        H\left(F_{K}(n\Delta t)\right)  \leq H\left(\lim_{t\rightarrow (n\Delta t)^{-}} F_{K}(t)\right).
    \end{equation*}

    Note that
    \begin{equation*}
        F_{K}(n\Delta t) = \left( \lim_{t\rightarrow (n\Delta t)^{-}}\pi F_{K}(t)\right) \otimes \left( \lim_{t\rightarrow (n\Delta t)^{-}}\pi F_{K}(t)\right) \neq \lim_{t\rightarrow (n\Delta t)^{-}} F_{K}(t),
    \end{equation*}
    hence it remains to show that for any $F$ s.t. $F(\mathbf{v}, \mathbf{w}) = F(\mathbf{w}, \mathbf{v})$ we have
    \begin{equation*}
        H(\pi F \otimes \pi F) \leq H(F).
    \end{equation*}

    Recall that the mutual information is always non-negative:
    \begin{equation*}
        I(F) \coloneqq \iint_{vw} F \log{\frac{F}{\pi F\otimes \pi F}} \geq 0.
    \end{equation*}
    Decompose the mutual information as follows
    \begin{equation*}
    \begin{split}
\iint_{vw} F \log{\frac{F}{\pi F\otimes \pi F}} = &\iint_{vw}F\log{F} - \iint_{vw} F \log{\pi F(\mathbf{v})} - \iint_{vw} F \log{\pi F(\mathbf{w})}\\
=& \iint_{vw}F\log{F} - \int_{v} \left(\int_{w}F\right) \log{\pi F(\mathbf{v})} - \int_{w} \left(\int_{v}F\right) \log{\pi F(\mathbf{w})}\\
=& \iint_{vw}F\log{F} - 2\int_{v} (\pi F) \log{(\pi F)}
    \end{split}
    \end{equation*}

    Now we can see that 
    \begin{equation*}
        H(F) \geq 2 H(\pi F) = H(\pi F \otimes \pi F).
    \end{equation*}
    Consequently
    \begin{equation*}
        H\left(F_{K}(n\Delta t)\right) = H\left(\left( \lim_{t\rightarrow (n\Delta t)^{-}}\pi F_{K}(t)\right) \otimes \left( \lim_{t\rightarrow (n\Delta t)^{-}}\pi F_{K}(t)\right)\right) \leq H\left(\lim_{t\rightarrow (n\Delta t)^{-}} F_{K}(t)\right).
    \end{equation*}

    Recall that $\widetilde{f}^{(n)} = \lim_{t\rightarrow (n\Delta t)^{-}}\pi F_{K}(t)$ and $2 H(\pi F) = H(\pi F \otimes \pi F)$, therefore the previous conclusion
    \begin{equation*}
        H\left(\left( \lim_{t\rightarrow (m\Delta t)^{-}}\pi F_{K}(t)\right) \otimes \left( \lim_{t\rightarrow (m\Delta t)^{-}}\pi F_{K}(t)\right)\right) \leq  H\left(\left( \lim_{t\rightarrow (n\Delta t)^{-}}\pi F_{K}(t)\right) \otimes \left( \lim_{t\rightarrow (n\Delta t)^{-}}\pi F_{K}(t)\right)\right)
    \end{equation*}
    implies that
    \begin{equation*}
        H(\widetilde{f}^{(m)}) \leq H(\widetilde{f}^{(n)}).
    \end{equation*}
    
    \end{proof}

\begin{coro}
    The consecutive LP flow will finally converge to the same Gaussian distribution equilibrium as the original q-flow.
\end{coro}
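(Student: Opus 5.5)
The plan is to combine the conservation laws (Proposition \ref{prop:conservation}) with the H-theorem (Theorem \ref{thm:entropy}) in a LaSalle-type argument applied to the discrete sequence $\widetilde{f}^{(n)}$.

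\textbf{Step 1: the candidate limit.} Proposition \ref{prop:conservation} holds on each interval $[n\Delta t,(n+1)\Delta t)$. By induction, every $\widetilde{f}^{(n)}$ has the mass, momentum and energy of $f^{(0)}$. The Gaussian $M$ with these moments is the unique minimizer of $H$ under these constraints, and $M$ is also the equilibrium of the original $q$-flow. So it suffices to show $\widetilde{f}^{(n)}\to M$.

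\textbf{Step 2: the entropy gap summed over steps.} By Theorem \ref{thm:entropy}, the sequence $H(\widetilde{f}^{(n)})$ is nonincreasing. It is bounded below by $H(M)$, since the moments are fixed. Its proof actually gives a quantitative chain:
\begin{equation*}
2H(\widetilde{f}^{(n+1)}) \le H(F(\cdot,(n+1)\Delta t^{-})) \le 2H(\widetilde{f}^{(n)}) - \int_{n\Delta t}^{(n+1)\Delta t} D(F(s))\,ds .
\end{equation*}
Here $D\ge 0$ is the Kac entropy production, and $F$ is the solution on $[n\Delta t,(n+1)\Delta t)$ started from $\widetilde{f}^{(n)}\otimes\widetilde{f}^{(n)}$. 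Summing over $n$ and telescoping gives
\begin{equation*}
\sum_n \int_{n\Delta t}^{(n+1)\Delta t} D(F(s))\,ds<\infty ,
\end{equation*}
and hence $\int_{n\Delta t}^{(n+1)\Delta t} D(F(s))\,ds\to 0$.

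\textbf{Step 3: compactness.} Uniform second moments give tightness. A uniform entropy bound gives equi-integrability. By Dunford–Pettis, along a subsequence $\widetilde{f}^{(n_k)}\rightharpoonup g$ weakly in $L^1$. The limit $g$ has the same mass and momentum as $f^{(0)}$, and its energy is at most that of $f^{(0)}$. I would obtain equality of the energy from a uniform bound on a higher moment, for example a fourth moment propagated by the Kac flow.

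\textbf{Step 4: the limit is Gaussian.} This is the main obstacle. Since $D$ is lower semicontinuous and the Kac semigroup is continuous in the initial data, the Kac flow started from $g\otimes g$ has zero entropy production on $[0,\Delta t]$. For the Landau case, zero production $\int F^{-1}\rho^{\gamma}|\nabla_\sigma F|^2=0$ means $F$ depends on $\mathbf{u}$ only through $|\mathbf{u}|$, for each fixed $\mathbf{z}$. For the Boltzmann case, with $b>0$, it means $F(\mathbf{v}',\mathbf{w}')=F(\mathbf{v},\mathbf{w})$. Applying this to $g\otimes g$ at $s=0$ yields the classical collision invariance $g(\mathbf{v}')g(\mathbf{w}')=g(\mathbf{v})g(\mathbf{w})$, or in the Landau case the vanishing of the Landau dissipation of $g$. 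The classical characterization then forces $\log g\in\mathrm{span}\{1,\mathbf{v},|\mathbf{v}|^2\}$, so $g$ is Gaussian. Matching moments gives $g=M$.

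\textbf{Step 5: conclusion.} The limit is the same for every subsequence, so the whole sequence converges. The degenerate factor $\rho^\gamma$ and possible vanishing of $F$ are where regularity must be handled carefully. I would first treat Maxwell molecules ($\gamma=0$) and positive smooth data, then argue by approximation. Convergence of $\widetilde{f}(t)$ between grid points follows because $\pi F(t)$ stays close to $\widetilde{f}^{(n)}$ uniformly once the entropy production over $[n\Delta t,(n+1)\Delta t)$ vanishes.
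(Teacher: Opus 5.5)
Your route differs from the paper's because the paper gives no argument. The corollary is simply placed after Proposition~\ref{prop:conservation} and Theorem~\ref{thm:entropy}, the implicit reasoning being that the conserved moments single out $M$ and the entropy is nonincreasing. That juxtaposition does not prove convergence: $H(\widetilde f^{(n)})\ge H(M)$ is nonincreasing and so has a limit, but nothing in Theorem~\ref{thm:entropy} forces the iterates to approach $M$. Your LaSalle-type argument supplies what is missing:
\begin{itemize}
\item the telescoped bound on the Kac entropy production;
\item compactness of the iterates;
\item weak lower semicontinuity of the convex production functional, together with weak continuity of $e^{sQ}$ (both fine as you use them);
\item the identification of zero-production states with Maxwellians.
\end{itemize}
The cost is extra input the paper never mentions. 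Also, what you obtain is weak $L^1$ convergence; strong convergence would further need $H(\widetilde f^{(n)})\to H(M)$, after which Csisz\'ar--Kullback--Pinsker gives it.

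Two points need repair.

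\textbf{The uniform moment bound in Step 3.} It is not ``propagated by the Kac flow'' for free. The Kac flow only moves mass within the spheres $\{\mathbf z,\rho\ \text{fixed}\}$, so what it conserves is $\iint F(|\mathbf v|^2+|\mathbf w|^2)^2$. Combined with $|\mathbf v|^4+|\mathbf w|^4\le(|\mathbf v|^2+|\mathbf w|^2)^2$, this yields only $m_4(\widetilde f^{(n+1)})\le m_4(\widetilde f^{(n)})+m_2^2$ (with $m_k$ the $k$-th moment), i.e.\ linear growth in $n$. Then nothing prevents energy from escaping to infinity and $g$ from being a colder Maxwellian, so this step is essential. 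You need a Povzner-type contraction.
\begin{itemize}
\item For VHS Maxwell molecules it comes from the explicit step $\widetilde f^{(n+1)}=e^{-\Delta t}\widetilde f^{(n)}+(1-e^{-\Delta t})Q^{+}(\widetilde f^{(n)},\widetilde f^{(n)})$, with $Q^{+}$ the gain part. Averaging $|\mathbf v'|^4$ over $\zeta$ gives $m_4(Q^{+}(f,f))\le\frac12(1+\frac1d)m_4(f)+C$.
\item For Landau with $\gamma=0$ it comes from the factor $e^{-8d\Delta t}$ multiplying the degree-two spherical harmonic in the Green's function.
\item For $\gamma\neq0$ it is more delicate.
\end{itemize}

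\textbf{The fallback ``smooth positive data, then approximation''.} This would not work. The available per-step $L^1$ stability estimate (the paper's $1+2C_Q\Delta t$) exceeds one, so approximation errors can grow geometrically in $n$ and do not commute with $n\to\infty$. It is also unnecessary. Convexity of the productions gives lower semicontinuity with no regularity, and the characterization of nonnegative $L^1$ solutions of $g(\mathbf v')g(\mathbf w')=g(\mathbf v)g(\mathbf w)$ needs no smoothness. That characterization also covers Landau, since there zero lifted production means $g\otimes g$ is invariant under all rotations of $\mathbf u$ at fixed $\mathbf z$.
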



\section{The semigroup generated by the lifted collision operator}\label{sec:semigroup}
On each sphere the master equation is linear and homogeneous, therefore an explicit analytical representation of the solution can be readily derived. As we will see later, such representation not only enables error analysis but also provides useful insight for the numerical implementation.

\subsection{The lifted Landau equation}
\subsubsection*{2D case}
Recall that the lifted 2d Landau equation reads
\begin{equation*}
    \partial_{t} F = QF = 4\rho^{\gamma}\Delta_{\phi} F(\rho, \phi;\mathbf{z},t).
\end{equation*}
Given initial condition $F_{0}(\rho, \phi; \mathbf{z})$, the solution to the above equation is
\begin{equation*}
    F(\rho,\phi,t;\mathbf{z}) = \exp(Qt) F_{0}= \int_{0}^{2\pi} K(\rho,\phi-\xi,t)F_{0}(\rho, \xi;\mathbf{z})d\xi,
\end{equation*}
where the Green's function for diffusion on a circle reads
\begin{equation*}
    K(\rho,\phi-\xi,t) = \frac{1}{2\pi} \sum_{k=-\infty}^{\infty} \exp(-k^{2}4\rho^{\gamma}t)\exp(ik(\phi-\xi)).
\end{equation*}
Consequently, the LP flow associated to 2d Landau equation yields
\begin{equation}\label{eq:2dLandau_LP}
    \widetilde{f}^{(n+1)} (\mathbf{v}) = \int_{w}\int_{\mathbb{S}^{1}} K(\rho,\cos^{-1}(\widehat{\mathbf{u}}\cdot \mathbf{\zeta}),\Delta t)\widetilde{f}^{(n)}(\mathbf{v}') \widetilde{f}^{(n)}(\mathbf{w}') d\zeta
\end{equation}

\subsubsection*{3D case}
Recall that the lifted 3d Landau equation reads
\begin{equation*}
    \partial_{t} F = QF = 4\rho^{\gamma}\Delta_{\sigma} F(\rho, \sigma;\mathbf{z},t).
\end{equation*}
Given initial condition $F_{0}(\rho, \sigma; \mathbf{z})$, the solution to the above equation is
\begin{equation*}
    F(\rho, \sigma, t; \mathbf{z}) = \exp(Qt) F_{0}= \int_{\mathbb{S}^{2}}K(\rho,\zeta\cdot\sigma, t)F_{0}(\rho, \zeta;\mathbf{z}) d\zeta,
\end{equation*}
where the Green's function for diffusion on a sphere reads
\begin{equation*}
    K(\rho,\zeta\cdot\sigma, t) = \frac{1}{4\pi}\sum_{l=0}^{\infty} \exp(-l(l+1)4\rho^{\gamma}t)P_{l}(\zeta\cdot\sigma),
\end{equation*}
and $P_{l}$ is the Legendre polynomial.

Consequently, the LP flow associated to 3d Landau equation yields
\begin{equation}\label{eq:3dLandau_LP}
    \widetilde{f}^{(n+1)} (\mathbf{v}) = \int_{w}\int_{\mathbb{S}^{2}} K(\rho,\widehat{\mathbf{u}}\cdot \mathbf{\zeta},\Delta t)\widetilde{f}^{(n)}(\mathbf{v}') \widetilde{f}^{(n)}(\mathbf{w}') d\zeta.
\end{equation}



\bigskip
\begin{remark}
    From (\ref{eq:2dLandau_LP}) and (\ref{eq:3dLandau_LP}) we can observe that, both relations are in the same form as the gain part of a Boltzmann operator. As a consequence, any discretization of Boltzmann operator can be translated straightforwardly.
\end{remark}

\subsection{The lifted VHS Boltzmann equation}
In what follows we focus on a specific family of Boltzmann collision operators: the variable hard sphere (VHS) model, whose kernel is independent on the angular variable:
\begin{equation*}
    B(\mathbf{v}-\mathbf{w},\sigma) \equiv \frac{1}{\int_{\mathbb{S}^{d-1}}1d\zeta}\left\vert\mathbf{v}-\mathbf{w}\right\vert^{\gamma}.
\end{equation*}
Define $\mathbf{u} = \mathbf{v} - \mathbf{w}$, $\mathbf{z} = \mathbf{v} + \mathbf{w}$, $\rho = |\mathbf{u}|$, and $\mathbf{\sigma} = \mathbf{u}/|\mathbf{u}|$. 

Let $F = F(\rho, \sigma; \mathbf{z},t)$, and define spherical averaging operator as follows:
\begin{equation*}
    A F(\rho, \sigma;\mathbf{z}) \coloneqq \frac{\int_{\mathbb{S}^{d-1}}  F(\rho, \zeta;\mathbf{z}) d\zeta}{\int_{\mathbb{S}^{d-1}}1d\zeta},
\end{equation*}
then the lifted Boltzmann equation reads
\begin{equation}\label{eq:simpleBoltz}
    \partial_{t} F = QF =\rho^{\gamma} \left(A F - F\right).
\end{equation}

For each fixed $\rho$ and $\mathbf{z}$, define a rescaled time variable $s = \rho^{\gamma}t$, we obtain
\begin{equation*}
    \partial_{s}F = AF - F
\end{equation*}
Applying the method of integrating factors,
\begin{equation*}
    \partial_{s}\left(e^{s}F\right) = A e^{s}F,
\end{equation*}
it follows that
\begin{equation*}
    e^{s} F(s) = \exp(As) e^{0} F_{0} = \exp(As)F_{0},
\end{equation*}
and the solution $F(s)$ can be written as
\begin{equation*}
    F(s) = e^{-s}\exp(As)F_{0}.
\end{equation*}

Now it remains to derive the operator $\exp(As)$. Note that the spherical averaging operator $A$ is a projection operator, meaning $A^{2} = A$. By definition we have
\begin{equation*}
    \exp(As) \coloneqq  I + \sum_{n=1}^{\infty} \frac{(As)^{n}}{n!} = I + A \sum_{n=1}^{\infty} \frac{s^{n}}{n!}=I + A(e^{s}-1),
\end{equation*}
hence
\begin{equation*}
    F(s) = e^{-s}\exp(As)F_{0} = e^{-s}\left(I + A(e^{s}-1)\right)F_{0} = e^{-s}F_{0} + (1-e^{-s})AF_{0}.
\end{equation*}

Recall that $s= \rho^{\gamma}t$, therefore given initial condition $F_{0}(\rho, \sigma; \mathbf{z})$, the solution to the lifted Boltzmann equation is 
\begin{equation}\label{eq:sol_to_Boltz}
    F(t) =\exp{(Qt)}F_{0}= e^{-\rho^{\gamma}t} F_{0}+ \left(1-e^{-\rho^{\gamma}t}\right)AF_{0}
\end{equation}

Consequently, the LP flow associated to VHS Boltzmann equation yields
\begin{equation}\label{eq:VHSBoltzmann_LP}
    \widetilde{f}^{(n+1)}(\mathbf{v}) = \widetilde{f}^{(n)}(\mathbf{v}) + \int_{w}\int_{\mathbb{S}^{d-1}}K(\rho, \widehat{\mathbf{u}}\cdot\zeta, \Delta t) \left(\widetilde{f}^{(n)}(\mathbf{v}')\widetilde{f}^{(n)}(\mathbf{w}') - \widetilde{f}^{(n)}(\mathbf{v})\widetilde{f}^{(n)}(\mathbf{w})\right)d\zeta,
\end{equation}
where
\begin{equation*}
    K(\rho, \widehat{\mathbf{u}}\cdot\zeta, \Delta t) = \left(1-e^{-\rho^{\gamma}\Delta t}\right)\frac{1}{\int_{\mathbb{S}^{d-1}}1d\zeta}.
\end{equation*}

\bigskip
In particular, for Maxwell molecules, i.e. when $\gamma = 0$, we have
\begin{equation*}
    \widetilde{f}^{(n+1)}(\mathbf{v}) = \widetilde{f}^{(n)}(\mathbf{v}) + (1-e^{-\Delta t})\int_{w}\int_{\mathbb{S}^{d-1}}B(\mathbf{v}-\mathbf{w},\sigma)\left(\widetilde{f}^{(n)}(\mathbf{v}')\widetilde{f}^{(n)}(\mathbf{w}') - \widetilde{f}^{(n)}(\mathbf{v})\widetilde{f}^{(n)}(\mathbf{w})\right)d\zeta.
\end{equation*}

It turns out that in this case the LP flow is just replacing the time step $\Delta t$ with $(1-e^{-\Delta t})$ in a forward Euler scheme.

\subsection{Analysis on Boltzmann equation for Maxwell molecules}

In this subsection we present error analysis on the consecutive LP flow associated to Boltzmann equation for Maxwell molecules ($\gamma = 0$).

\begin{lemma}
    If $\gamma = 0$, then there exists a constant $C_{Q}$ such that
    \begin{equation*}
        \frac{1}{t}\left\lVert\left(\exp{(Qt)}- I\right)F\right\rVert_{L^{1}} \leq C_{Q}\left\lVert F\right\rVert_{L^{1}}
    \end{equation*}
\end{lemma}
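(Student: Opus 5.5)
The plan is to use the explicit representation (\ref{eq:sol_to_Boltz}) of the semigroup. With $\gamma = 0$ we have $\rho^{\gamma}\equiv 1$, so for every $t\geq 0$
\begin{equation*}
    \exp(Qt)F = e^{-t}F + (1-e^{-t})AF,
\end{equation*}
and therefore
\begin{equation*}
    \left(\exp(Qt)-I\right)F = (1-e^{-t})\left(AF - F\right).
\end{equation*}
This reduces the lemma to two separate facts: an $L^{1}$ bound on $A - I$, and the elementary scalar bound $(1-e^{-t})/t \leq 1$ for $t>0$. The scalar bound follows from convexity of $e^{-t}$, or equivalently from $1-e^{-t} = \int_{0}^{t} e^{-s}ds \leq t$.

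The main step is to show that the spherical averaging operator $A$ is a contraction on $L^{1}(\mathbb{R}^{2d})$, i.e. $\lVert AF\rVert_{L^{1}} \leq \lVert F\rVert_{L^{1}}$.

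\begin{itemize}[leftmargin=*]
    \item \emph{Change of variables.} Pass from $(\mathbf{v},\mathbf{w})$ to $(\mathbf{z},\mathbf{u})$. This is linear with constant Jacobian ($2^{-d}$), so $L^{1}$ norms in the two sets of coordinates agree up to a fixed factor. That factor cancels on both sides of the inequality.
    \item \emph{Polar coordinates.} Write $\mathbf{u} = \rho\sigma$, so $d\mathbf{u} = \rho^{d-1}d\rho\, d\sigma$. For fixed $(\rho,\mathbf{z})$, the function $AF$ is constant in $\sigma$ and equals $|\mathbb{S}^{d-1}|^{-1}\int_{\mathbb{S}^{d-1}} F(\rho,\zeta;\mathbf{z})\,d\zeta$.
    \item \emph{Sphere-wise estimate.} On each sphere,
    \begin{equation*}
        \int_{\mathbb{S}^{d-1}}|AF|\,d\sigma = \left|\int_{\mathbb{S}^{d-1}}F\,d\zeta\right| \leq \int_{\mathbb{S}^{d-1}}|F|\,d\zeta.
    \end{equation*}
    \item \emph{Integrate out.} Integrating this in $\rho^{d-1}d\rho\,d\mathbf{z}$ gives $\lVert AF\rVert_{L^{1}}\leq\lVert F\rVert_{L^{1}}$.
\end{itemize}

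Here I would be careful about measurability and Fubini. Both are fine for $F\in L^{1}$, since the polar-coordinate disintegration is valid almost everywhere in $(\rho,\mathbf{z})$.

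Combining the pieces with the triangle inequality:
\begin{equation*}
    \frac{1}{t}\left\lVert(\exp(Qt)-I)F\right\rVert_{L^{1}} = \frac{1-e^{-t}}{t}\lVert AF-F\rVert_{L^{1}} \leq \lVert AF\rVert_{L^{1}}+\lVert F\rVert_{L^{1}} \leq 2\lVert F\rVert_{L^{1}}.
\end{equation*}
Hence the constant $C_{Q}=2$ works. More generally, $C_{Q}$ equals $\sup\rho^{\gamma}\cdot 2$ whenever that supremum is finite, which explains why the restriction $\gamma=0$ is needed. If one instead keeps $F\geq 0$ and uses that $A$ preserves integrals, the bound can still only be taken as $2$ in general. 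So I would simply state $C_{Q}=2$, valid uniformly in $t>0$.

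I do not expect a real obstacle. The only point needing care is justifying the $L^{1}$ contraction of $A$ in the $(\mathbf{z},\rho,\sigma)$ coordinates, in particular tracking the Jacobian factors. Since the same factors appear on both sides of the inequality, they cancel.
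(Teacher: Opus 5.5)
Your proposal is correct and follows the same route as the paper, which also writes $(\exp(Qt)-I)F=(1-e^{-t})(A-I)F$ for $\gamma=0$ and uses $(1-e^{-t})/t\le 1$. The one difference is the last step: the paper asserts $\lVert (A-I)F\rVert_{L^{1}}\le\lVert F\rVert_{L^{1}}$ (so $C_{Q}=1$), but this is false in general. For $F=\chi(\rho,\mathbf{z})\mathbf{1}_{C}(\sigma)$ with $\chi\ge 0$ and $C$ a cap of normalized measure $\epsilon$, one gets $\lVert (A-I)F\rVert_{L^{1}}=2(1-\epsilon)\lVert F\rVert_{L^{1}}$. Your $L^{1}$-contraction of $A$ plus the triangle inequality gives the correct constant $C_{Q}=2$, which is all the lemma requires.
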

\begin{proof}
    By definition we have
    \begin{equation*}
        \begin{split}
            \frac{1}{t}\left\lVert\left(\exp{(Qt)}- I\right)F\right\rVert_{L^{1}}  =&\left\lVert \frac{1-e^{-t}}{t}(A-I)F\right\rVert_{L^{1}} \\
            \leq & \left\lVert (A-I)F\right\rVert_{L^{1}}\\
            \leq & \left\lVert F\right\rVert_{L^{1}}
        \end{split}
    \end{equation*}
\end{proof}
\begin{theorem}[Error analysis of consecutive LP flow]
    The error of consecutive LP flow is bounded as follows:
    \begin{equation*}
        \left\lVert \widetilde{f}(T)- f(T)\right\rVert_{L^{1}} \lesssim 
        e^{\lambda T}\Delta t,
    \end{equation*}
    where $\lambda$ is a constant that depends on the collision kernel.
\end{theorem}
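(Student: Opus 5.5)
The plan is the standard ``local truncation error plus stability'' argument (Lady Windermere's fan), carried out in $L^{1}$ for Maxwell molecules. Write the exact Boltzmann flow as $f(t)=\Phi_{t}f^{(0)}$ with $\partial_t f=q(f)$, and the one-step LP map as $\Psi_{\Delta t}g:=\pi\exp(Q\Delta t)(g\otimes g)$. Since $\gamma=0$, this map is explicit:
\begin{equation*}
\Psi_{\Delta t}g = g+(1-e^{-\Delta t})\,q(g).
\end{equation*}
Then $\widetilde f^{(n+1)}=\Psi_{\Delta t}\widetilde f^{(n)}$. We telescope
\begin{equation*}
\widetilde f^{(N)}-f(N\Delta t)=\sum_{n=0}^{N-1}\Big(\Phi_{(N-n-1)\Delta t}\Psi_{\Delta t}\widetilde f^{(n)}-\Phi_{(N-n-1)\Delta t}\Phi_{\Delta t}\widetilde f^{(n)}\Big).
\end{equation*}
Each summand is a one-step local error propagated by the exact flow. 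For $T$ between grid points, the same estimate applied on the partial last interval adds one further local error term.

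\textbf{Local error.} Proposition~\ref{prop:error_tangent} applies because the LP flow is a tangent flow (previous lemma). It gives
\begin{equation*}
\|\Psi_{\Delta t}g-\Phi_{\Delta t}g\|_{L^1}\le \tfrac12\big(\|\partial_{tt}f\|+\|\partial_{tt}\widetilde f\|\big)\Delta t^{2}.
\end{equation*}
For the LP part, $\partial_{tt}\widetilde f=-e^{-t}q(g)$, so it suffices to bound $\|q(g)\|_{L^1}\le 2\|g\|_{L^1}^2$ for the Maxwell kernel. This follows from the preceding lemma: $\|(A-I)(g\otimes g)\|_{L^1}\le 2\|g\|_{L^1}^{2}$, and $\pi$ is an $L^1$ contraction. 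For the exact flow, $\partial_{tt}f=Dq(f)[q(f)]=2\widetilde q(f,q(f))$ with the symmetrized bilinear operator, so $\|\partial_{tt}f\|_{L^1}\lesssim\|f\|_{L^1}^{3}$.

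Mass is conserved by both flows (Proposition~\ref{prop:conservation} for the LP flow, classical for Boltzmann). Positivity also needs checking. For $\Psi_{\Delta t}$ it holds because $F(t)=e^{-t}F_0+(1-e^{-t})AF_0\ge 0$ by \eqref{eq:sol_to_Boltz}. Together these give uniform $L^1$ bounds, so the local error is $C_{1}\|f^{(0)}\|_{L^1}^{3}\Delta t^{2}$ uniformly in $n$.

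\textbf{Stability of the exact flow.} This is the step I expect to be the main obstacle. We need $\|\Phi_t g-\Phi_t h\|_{L^1}\le e^{\lambda t}\|g-h\|_{L^1}$ for nonnegative $g,h$ of bounded mass. I would write $e=\Phi_tg-\Phi_th$. By bilinearity, $\partial_t e=\widetilde q(\Phi_tg+\Phi_th,\,e)$. Pairing with $\mathrm{sign}(e)$ and using the Maxwell-kernel bound $\|\widetilde q(a,b)\|_{L^1}\le 2\|a\|_{L^1}\|b\|_{L^1}$ then gives
\begin{equation*}
\tfrac{d}{dt}\|e\|_{L^1}\le \lambda\|e\|_{L^1},\qquad \lambda=4\|f^{(0)}\|_{L^1}\,\textstyle\int b .
\end{equation*}
Gronwall then yields the claim. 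The delicate points are justifying the sign pairing rigorously (regularize $|e|$) and making sure the propagated data stay nonnegative with controlled mass. Nonnegativity holds for the LP iterates $\widetilde f^{(n)}$ as shown above, and for the exact flow by the classical theory for Maxwell molecules.

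\textbf{Summation.} Combining the two steps,
\begin{equation*}
\|\widetilde f(T)-f(T)\|_{L^1}\le\sum_{n}e^{\lambda(T-(n+1)\Delta t)}C_1\Delta t^{2}\le C_1 T e^{\lambda T}\Delta t\lesssim e^{\lambda T}\Delta t.
\end{equation*}
The factor $T$ can be absorbed by enlarging $\lambda$. The constant $\lambda$ depends only on the angular kernel $b$ and the initial mass, as claimed.
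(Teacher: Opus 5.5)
Your proof is correct, but it is organized dually to the paper's. In your notation, the paper splits $\widetilde f^{(n+1)}-f^{(n+1)}=\bigl[\Psi_{\Delta t}\widetilde f^{(n)}-\Psi_{\Delta t}f^{(n)}\bigr]+\bigl[\Psi_{\Delta t}f^{(n)}-\Phi_{\Delta t}f^{(n)}\bigr]$, and handles the three ingredients as follows:
\begin{itemize}
\item \textbf{Stability} is proved for the LP one-step map. It uses only the linear semigroup bound $\|(\exp(Qt)-I)F\|_{L^1}\le C_Q t\|F\|_{L^1}$ and $\|a\otimes a-b\otimes b\|_{L^1}\le(\|a\|_{L^1}+\|b\|_{L^1})\|a-b\|_{L^1}$, which give the amplification factor $1+2C_Q\Delta t$.
\item \textbf{Local error} is taken along the exact trajectory via Proposition~\ref{prop:error_tangent}, with $\partial_{tt}f$ bounded by citing the regularity literature.
\item A \textbf{discrete Gronwall} inequality closes the argument.
\end{itemize}

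You instead transport local errors with the exact flow (Lady Windermere's fan). So you need $L^1$-Lipschitz stability of the nonlinear Boltzmann flow, and local errors at the numerical iterates. That in turn forces you to control $\Phi_t\widetilde f^{(n)}$ uniformly in $n$, hence your explicit check of positivity and mass conservation of the LP iterates. The paper uses this point only tacitly when it sets $\|\widetilde f^{(n)}\|_{L^1}=\|f^{(n)}\|_{L^1}=1$.

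The paper's route puts the stability burden on the linear lifted semigroup. That is the whole point of the LP framework and the piece that carries over to other schemes in the LP family, and it never needs continuous dependence of the nonlinear flow on its data. Your route is more self-contained: you derive $\|\partial_{tt}f\|_{L^1}\lesssim\|f\|_{L^1}^3$ directly instead of citing it. The price is reliance on $q$ being a bounded bilinear map on $L^1$, which is special to cutoff Maxwell molecules.

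Two small remarks. First, the step you flagged as the main obstacle is routine. Since $e\in C^1([0,T];L^1)$, you have $\|e(t)\|_{L^1}\le\|e(0)\|_{L^1}+\int_0^t\|\partial_s e\|_{L^1}\,ds$, and Gronwall applies directly with no sign pairing. Second, your constant $2$ in $\|(A-I)F\|_{L^1}\le 2\|F\|_{L^1}$ is the correct one. The paper's lemma states it with constant $1$, which fails for $F$ concentrated on a small spherical cap, though this only affects constants.
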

\begin{proof}
    Recall that the consecutive Kac flow forms a semigroup within each interval $[n\Delta t,(n+1)\Delta t)$, i.e.
    \begin{equation*}
        \widetilde{f}^{(n+1)} = \pi \exp{(\Delta t Q)} \left(\widetilde{f}^{(n)}\otimes \widetilde{f}^{(n)}\right).
    \end{equation*}
    Define an auxiliary function $F_{*}$ on $[n\Delta t, (n+1)\Delta t)$ as follows:
    \begin{equation*}
        F_{*}(t)= \exp{((t-n\Delta t) Q)} \left(f^{(n)}\otimes f^{(n)}\right),
    \end{equation*}
    and let $F_{*}^{(n+1)}$ be $\exp{(\Delta t Q)} \left(f^{(n)}\otimes f^{(n)}\right)$.

    Decompose the error into two parts:
    \begin{equation*}
        \begin{split}
            \left\lVert \widetilde{f}^{(n+1)}- \pi F_{*}^{(n+1)} +  \pi F_{*}^{(n+1)} - f^{(n+1)}\right\rVert_{L^{1}} = &\left\lVert \widetilde{f}^{(n+1)}- \pi F_{*}^{(n+1)} +  \pi F_{*}^{(n+1)} - f^{(n+1)}\right\rVert_{L^{1}}\\
            \leq &\left\lVert \widetilde{f}^{(n+1)}- \pi F_{*}^{(n+1)}\right\rVert_{L^{1}} + \left\lVert \pi F_{*}^{(n+1)} - f^{(n+1)}\right\rVert_{L^{1}}\\
            =& E_{stability} + E_{local}.
        \end{split}
    \end{equation*}

    Let us estimate the $E_{stability}$ term first. Since the projection operator $\pi$ is non-expansive in $L^{1}$, we have:
    \begin{equation*}
        \begin{split}
            E_{stability} \coloneqq &\left\lVert \widetilde{f}^{(n+1)}- \pi F_{*}^{(n+1)}\right\rVert_{L^{1}}\\
            =& \left\lVert \widetilde{f}^{(n+1)} - \widetilde{f}^{(n)}- \left(\pi F_{*}^{(n+1)}-f^{(n)}\right) + \left(\widetilde{f}^{(n)} - f^{(n)}\right)\right\rVert_{L^{1}}\\
            \leq& \left\lVert \widetilde{f}^{(n+1)} - \widetilde{f}^{(n)}- \left(\pi F_{*}^{(n+1)}-f^{(n)}\right)\right\rVert_{L^{1}} + \left\lVert \left(\widetilde{f}^{(n)} - f^{(n)}\right)\right\rVert_{L^{1}}\\
            \leq& \left\lVert \left(\exp{(\Delta t Q)} - I \right)\left(\widetilde{f}^{(n)}\otimes \widetilde{f}^{(n)} - f^{(n)}\otimes f^{(n)}\right) \right\rVert_{L^{1}} + \left\lVert \widetilde{f}^{(n)} - f^{(n)}\right\rVert_{L^{1}}\\
            \leq& C_{Q}\Delta t \left\lVert\widetilde{f}^{(n)}\otimes \widetilde{f}^{(n)} - f^{(n)}\otimes f^{(n)} \right\rVert_{L^{1}} + \left\lVert \widetilde{f}^{(n)} - f^{(n)}\right\rVert_{L^{1}}\\
            \leq& \left(C_{Q}\Delta t \left(\left\lVert \widetilde{f}^{(n)}\right\rVert_{L^{1}}+ \left\lVert  f^{(n)}\right\rVert_{L^{1}}\right) + 1\right)\left\lVert \widetilde{f}^{(n)} - f^{(n)}\right\rVert_{L^{1}}\\
            =&\left(2 C_{Q}\Delta t  + 1\right)\left\lVert \widetilde{f}^{(n)} - f^{(n)}\right\rVert_{L^{1}}.
        \end{split}
    \end{equation*}

    As for the bound of $E_{local}$, recall Proposition \ref{prop:error_tangent}, it remains to show that $\left\lVert \partial_{tt}\pi F_{*}\right\rVert_{L^{\infty}(L^{1},(n\Delta t, (n+1)\Delta t)))} $ and $\left\lVert \partial_{tt} f \right\rVert_{L^{\infty}(L^{1},(n\Delta t, (n+1)\Delta t)))} $ are bounded. The time regularity of $f$ as a solution to the Boltzmann equation can be found in the literature, for example \cite{desvillettes2000spatially}. Recall (\ref{eq:sol_to_Boltz}) we have
    \begin{equation*}
        \begin{split}
            &\left\lVert \partial_{tt}\pi F_{*}\right\rVert_{L^{\infty}(L^{1},(n\Delta t, (n+1)\Delta t)))} + \left\lVert \partial_{tt}f\right\rVert_{L^{\infty}(L^{1},(n\Delta t, (n+1)\Delta t)))} \\
            \leq& \left\lVert \partial_{tt} F_{*}\right\rVert_{L^{\infty}(L^{1},(n\Delta t, (n+1)\Delta t)))} + \left\lVert \partial_{tt}f\right\rVert_{L^{\infty}(L^{1},(n\Delta t, (n+1)\Delta t)))}\\
            =&\left\lVert QQ F_{*}\right\rVert_{L^{\infty}(L^{1},(n\Delta t, (n+1)\Delta t)))} + \left\lVert \partial_{tt}f\right\rVert_{L^{\infty}(L^{1},(n\Delta t, (n+1)\Delta t)))}\\
            =& \left\lVert -e^{-(t-n\Delta t)} Q\left(f^{(n)}\otimes f^{(n)}\right)\right\rVert_{L^{\infty}(L^{1},(n\Delta t, (n+1)\Delta t)))} + \left\lVert \partial_{tt}f\right\rVert_{L^{\infty}(L^{1},(n\Delta t, (n+1)\Delta t)))}\\
            \leq & C_{q}(f^{(0)}).
        \end{split}
    \end{equation*}

    To conclude,
    \begin{equation*}
        \begin{split}
            &\left\lVert \widetilde{f}^{(n+1)} - f^{(n+1)}\right\rVert_{L^{1}} \leq \left(2 C_{Q}\Delta t  + 1\right)\left\lVert \widetilde{f}^{(n)} - f^{(n)}\right\rVert_{L^{1}} + \frac{C_{q}}{2} (\Delta t)^{2}\\
            \Rightarrow & \left\lVert \widetilde{f}^{(n)} - f^{(n)}\right\rVert_{L^{1}} \lesssim e^{2C_{Q}n\Delta t}\left(\frac{C_{q}}{4C_{Q}}\Delta t \right).
        \end{split}
    \end{equation*}
\end{proof}

\section{Numerical schemes based on lifting and projection}\label{sec:scheme}

In this section we present a novel family of numerical methods based on lifting and projection.

\subsection{Formulation}

Given a discrete solution $f_{h}^{(n)}$, we propose the following scheme to obtain $f^{(n+1)}_{h}$.

\begin{enumerate}
    \item Solve the linear Kac master equation (lifted Landau/Boltzmann)
    \begin{equation}\label{eq:discrete_lifted}
    F^{(n+1)}_{h} = f^{(n)}_{h}\otimes f^{(n)}_{h} + \sqint_{n\Delta t}^{(n+1)\Delta t } Q_{h}F_{h}(\tau) d\tau,
    \end{equation}
    where $F_{h}(\tau)$ solves the equation $\partial_{t} F_{h} = Q_{h} F_{h}$ on $(n \Delta t, (n+1)\Delta t)$ with initial condition $f^{(n)}_{h}\otimes f^{(n)}_{h}$, and $\sqint$ represents the numerical integration in time.
    \item Apply projection
    \begin{equation*}
        f^{(n+1)}_{h} =\pi F^{(n+1)}_{h}.
    \end{equation*}
\end{enumerate}

\begin{remark}[Discussion on positivity]
    Positivity of the distribution function $f$ and conservation of mass, momentum and energy are two key properties that people try to preserve when solving Landau/Boltzmann equations numerically.

The reason we need positivity is twofold. Firstly, the entropy is only well-defined for positive functions. Secondly, the diffusion term in Landau equation depends on $f$ itself. And when $f$ is not non-negative, that diffusion coefficient can lose positive semi-definiteness, which may cause numerical instability.

For a lot of numerical schemes, positivity contradicts conservation, and in this dilemma we need to sacrifice one of them. From (\ref{eq:discrete_lifted}) we learn that if the stability of LP-flow does not rely on positivity, then positivity can be sacrificed. Indeed, the only equation that we are solving numerically is the linear diffusion on spheres, where the diffusion coefficient is always positive semi-definite. 
\end{remark}

\subsection{Forward Euler scheme}
Recall the classical scheme
\begin{equation}\label{eq:discrete_classical}
    f^{(n+1)}_{h} = f^{(n)}_{h} + \sqint_{n\Delta t}^{(n+1)\Delta t } q_{h}f_{h}(\tau) d\tau.
\end{equation}

Schemes \ref{eq:discrete_lifted} and \ref{eq:discrete_lifted} are equivalent only when using forward Euler scheme. In that case we have
\begin{equation*}
    \pi \sqint_{n\Delta t}^{(n+1)\Delta t } Q_{h}F_{h}(\tau) d\tau = \Delta t \pi Q_{h} (f^{(n)}_{h}\otimes f^{(n)}_{h}),
\end{equation*}
and
\begin{equation*}
    \sqint_{n\Delta t}^{(n+1)\Delta t } q_{h}f_{h}(\tau) d\tau = \Delta t q_{h} f_{h}^{(n)}.
\end{equation*}
Equivalence can be easily verified when the spatial discretizations $Q_{h}$ and $q_{h}$ satisfy that
\begin{equation*}
\pi Q_{h} (f^{(n)}_{h}\otimes f^{(n)}_{h}) = q_{h} f_{h}^{(n)}
\end{equation*}

\bigskip
In particular, a Galerkin scheme for $F$ can be written as
\begin{equation*}
\left(F_{h}^{(n+1)}, \varphi_{h}\otimes \psi_{h}\right) = \left(f_{h}^{(n)}\otimes f_{h}^{(n)}, \varphi_{h}\otimes \psi_{h}\right) - \Delta t B_{h}\left(f_{h}^{(n)}\otimes f_{h}^{(n)}, \varphi_{h}\otimes \psi_{h} \right).
\end{equation*}

The above linear system has a huge degree of freedom, but since we are not interested in the full vector $F^{(n+1)}_{h}$, it suffices to solve a sub-system. Indeed, suppose we have a set of orthogonal basis $\varphi_{i}(\mathbf{v})\psi_{j}(\mathbf{w})$, with $\psi_{0}=1_{\Omega_{L}}$, where $\Omega_{L}\subset \mathbb{R}^{d}$ is the cut-off domain, then  
\begin{equation*}
    F_{h}^{(n+1)} = \sum_{i=0}^{N}\sum_{j=0}^{N} F_{i,j}^{(n+1)}\varphi_{i}(\mathbf{v})\psi_{j}(\mathbf{w}),
\end{equation*}
and 
\begin{equation*}
    \pi F_{h}^{(n+1)} \coloneqq \int_{w}F_{h}^{(n+1)} = \sum_{i=0}^{N}\sum_{j=0}^{N} F_{i,j}^{(n+1)}\varphi_{i}(\mathbf{v})\delta_{j,0}\left\lVert \psi_{0}\right\rVert^{2}_{L^{2}} = \left\lVert \psi_{0}\right\rVert^{2}_{L^{2}}\sum_{i=0}^{N}F_{i,0}^{(n+1)}\varphi_{i}(\mathbf{v}) .
\end{equation*}
The $N \times 1$ vector $F_{i,0}^{(n+1)}$ is just a sub-vector of $N^{2}\times 1$ vector $F_{i,j}^{(n+1)}$. 

Let $\psi_{h} = \psi_{0}$, we obtain the equation for $\pi F_{h}^{n+1}$:
\begin{equation*}
    \left(\pi F_{h}^{(n+1)}, \varphi_{h}\right) = \left(f_{h}^{(n)}, \varphi_{h}\right) - \Delta t B_{h}\left(f_{h}^{(n)}\otimes f_{h}^{(n)}, \varphi_{h}\otimes 1\right).
\end{equation*}
In other words, any scheme that can be written in the following form belongs to the lifting-projection scheme family:
\begin{equation*}
    \left(f_{h}^{(n+1)}, \varphi_{h}\right) = \left(f_{h}^{(n)}, \varphi_{h}\right) - \Delta t B_{h}\left(f_{h}^{(n)}\otimes f_{h}^{(n)}, \varphi_{h}\otimes 1\right).
\end{equation*}
For example, the finite difference method for relativistic Landau equation by Shiroto and Sentoku \cite{shiroto2019structure}, the spectral method for Landau equation by Pareschi et. al. \cite{pareschi2000fast}, and the Petrov-Galerkin method for Boltzmann equation by Gamba and Rjasanow \cite{gamba2018galerkin}.


\subsection{Backward Euler scheme}
Recall the classical backward Euler schemes
\begin{equation*}
    f^{(n+1)}_{h} = f^{(n)}_{h} + \Delta t q_{h} f_{h}^{(n+1)}.
\end{equation*}
They have a equivalent lifted version as follows:
\begin{equation*}
    f^{(n+1)}_{h} \otimes f^{(n+1)}_{h}= f^{(n)}_{h}\otimes f^{(n)}_{h} + \Delta t Q_{h} (f_{h}^{(n+1)}\otimes f_{h}^{(n+1)}).
\end{equation*}
Using backward Euler on the lifted equation, we have
    \begin{equation*}
        F^{(n+1)}_{h} = f^{(n)}_{h}\otimes f^{(n)}_{h} + \Delta t Q_{h} F_{h}^{(n+1)}.
    \end{equation*}

In general, the solution $F^{(n+1)}_{h}$ cannot be expressed as a rank-$1$ function like $f^{(n+1)}_{h} \otimes f^{(n+1)}_{h}$, therefore $\pi F_{h}^{(n+1)} \neq f_{h}^{(n+1)}$.

In conclusion, backward Euler schemes for the lifted Landau/Boltzmann equation cannot be the lifted version of any existing scheme for the Landau/Boltzmann equation. The following example reveals the difference.

\bigskip
Consider the Boltzmann equation associated to VHS model. By definition of the backward Euler scheme:
    \begin{equation}\label{eq:imp_Boltz}
        F^{(n+1)} - \widetilde{f}^{(n)}\otimes \widetilde{f}^{(n)} = \Delta t \rho^{\gamma} AF^{(n+1)} - \Delta t \rho^{\gamma} F^{(n+1)}.
    \end{equation}
Apply spherical averaging operator $A$ on both sides, note that since $AA = A$, we have
    \begin{equation*}
        AF^{(n+1)} - A\left(\widetilde{f}^{(n)}\otimes \widetilde{f}^{(n)}\right) = \Delta t\rho^{\gamma} AF^{(n+1)} - \Delta t \rho^{\gamma}AF^{(n+1)} \Rightarrow AF^{(n+1)} = A\left(\widetilde{f}^{(n)}\otimes \widetilde{f}^{(n)}\right).
    \end{equation*}
Substitute the above equation back into (\ref{eq:imp_Boltz}), we obtain
    \begin{equation*}
        F^{(n+1)} - \widetilde{f}^{(n)}\otimes \widetilde{f}^{(n)} = \Delta t\rho^{\gamma} A\left(\widetilde{f}^{(n)}\otimes \widetilde{f}^{(n)}\right) - \Delta t\rho^{\gamma} F^{(n+1)}.
    \end{equation*}
It follows that
    \begin{equation*}
        \begin{split}
            F^{(n+1)} =& \frac{1}{1+\Delta t\rho^{\gamma}}\widetilde{f}^{(n)}\otimes \widetilde{f}^{(n)} + \frac{\Delta t\rho^{\gamma}}{1+\Delta t\rho^{\gamma}}A\left(\widetilde{f}^{(n)}\otimes \widetilde{f}^{(n)}\right)\\
            =&\widetilde{f}^{(n)}\otimes \widetilde{f}^{(n)} + \frac{\Delta t\rho^{\gamma}}{1+\Delta t\rho^{\gamma}}\left(A\left(\widetilde{f}^{(n)}\otimes \widetilde{f}^{(n)}\right)-\widetilde{f}^{(n)}\otimes \widetilde{f}^{(n)}\right)\\
            =&\widetilde{f}^{(n)}\otimes \widetilde{f}^{(n)} + \frac{\Delta t\rho^{\gamma}}{1+\Delta t\rho^{\gamma}}\rho^{-\gamma}Q\left(\widetilde{f}^{(n)}\otimes \widetilde{f}^{(n)}\right),
        \end{split}
    \end{equation*}
or equivalently 
    \begin{equation*}
        \widetilde{f}^{(n+1)} = \widetilde{f}^{(n)} +\pi \frac{\Delta t}{1+\Delta t\rho^{\gamma}}Q\left(\widetilde{f}^{(n)}\otimes \widetilde{f}^{(n)}\right).
    \end{equation*}
In particular, when $\gamma = 0$, we have
\begin{equation*}
    \widetilde{f}^{(n+1)} = \widetilde{f}^{(n)} +\frac{\Delta t}{1+\Delta t}q\left(\widetilde{f}^{(n)}\right)
\end{equation*}
Now we can see that the backward Euler LP scheme actually gain stability through a rescaled time step.

\begin{remark}
    The backward Euler LP scheme can also be written in variational form as follows:
    \begin{equation*}
        F^{(n+1)} = \argmin\limits_{F} \left[\lVert F - \widetilde{f}^{(n)}\otimes \widetilde{f}^{(n)}\rVert^{2}_{L^{2}} + 2\Delta t \left( \rho^{\gamma/2}\nabla_{\sigma} F, \rho^{\gamma/2}\nabla_{\sigma} F\right)\right]
    \end{equation*}
    This form is analogous to the variational approximation schemes based on gradient flow in \cite{erbar2023gradient, carrillo2024landau}. In their papers, the energy functional containing entropy is fully nonlinear and requires positivity, while in our paper the energy functional is bilinear and well-defined for signed functions.
\end{remark}

\subsection{Green's function method}
Recall the analytical representation of the LP solutions (\ref{eq:3dLandau_LP}) and (\ref{eq:VHSBoltzmann_LP}). Since both of them are in the form of a Boltzmann operator, it suffices to take the 3d Landau equation as an example.

For 3d Landau equation we have
\begin{equation*}
    \widetilde{f}^{(n+1)} (\mathbf{v}) = \int_{w}\int_{\mathbb{S}^{2}} K(\rho,\widehat{\mathbf{u}}\cdot \mathbf{\zeta},\Delta t)\widetilde{f}^{(n)}(\mathbf{v}') \widetilde{f}^{(n)}(\mathbf{w}') d\zeta.
\end{equation*}
where
\begin{equation*}
    K(\rho,\zeta\cdot\sigma, t) = \frac{1}{4\pi}\sum_{l=0}^{\infty} \exp(-l(l+1)4\rho^{\gamma}t)P_{l}(\zeta\cdot\sigma).
\end{equation*}

In \cite{gamba2017fast} the authors proposed a fast spectral method for the computation of a general Boltzmann collision operator. The cost is in the order of $\mathcal{O}(m n^{4}\log n)$, where $n$ is the number of modes in each dimension and $m$ is the number of quadrature points on a sphere. That method can be applied here with no extra effort.

\begin{remark}
    For comparison, computing the Landau collision operator with spectral method \cite{pareschi2000fast} takes $\mathcal{O}(n^{3}\log n)$ flops. The evaluation part is faster than the discrete semigroup method proposed here. However, its time step is restricted by stability considerations. As we have mentioned in the subsection on forward Euler scheme, the time step size is restricted by the CFL condition for diffusion equations, i.e. $\Delta t \lesssim \mathcal{O}(\frac{1}{n^{2}})$, while the Green's function method is unconditionally stable.
\end{remark}

\section{Numerical verification}\label{sec:verify}
In \ref{subsec:properties}, through Proposition \ref{prop:conservation} and Theorem \ref{thm:entropy} we have proved that the consecutive LP flow admits conservation laws and decay of entropy. In what follows we will verify those properties numerically. Consider the Boltzmann equation with Knudsen number $\varepsilon$:
\begin{equation*}
    \partial_{t} f = \frac{1}{\varepsilon}q(f)  = \frac{1}{\varepsilon} \Pi \lvert \mathbf{u}\rvert^{\gamma} (\mathcal{A}-I)(f\otimes f),
\end{equation*}
the associated lifted Boltzmann equation reads
\begin{equation}\label{eq:knudsen}
    \partial_{t} F = \frac{1}{\varepsilon}QF =\frac{1}{\varepsilon}\lvert \mathbf{u} \rvert^{\gamma} \left(\mathcal{A} F - F\right).
\end{equation}

Given initial condition $F(\mathbf{v}, \mathbf{w}, 0) = F_{0}(\mathbf{v}, \mathbf{w})$, the explicit solution to Equation~\eqref{eq:knudsen} is
\begin{equation*}
    F(\mathbf{v}, \mathbf{w}, t) = \exp(\frac{1}{\varepsilon}Qt) F_{0} = e^{-\lvert \mathbf{u} \rvert^{\gamma}t/\varepsilon} F_{0} + \left(1-e^{-\lvert \mathbf{u} \rvert^{\gamma}t/\varepsilon}\right) \mathcal{A} F_{0}.
\end{equation*}
Define the relaxed forward Euler mapping based on the above formula:
\begin{equation}\label{eq:relaxed_Euler}
    \Phi_{\tau}\left(f_{h}^{(s)}\right) \coloneqq f^{(s)}_{h} + \Pi (1-e^{-\tau \lvert \mathbf{u} \rvert^{\gamma}/\varepsilon})(\mathcal{A}-I)(f_{h}^{(s)}\otimes f_{h}^{(s)})
\end{equation}
It reduces to standard forward Euler method when $\tau \rightarrow 0$, moreover, it has been proved that for any Knudsen number $\varepsilon$ and any time step $\tau$, 
\begin{itemize}
    \item $\Phi_{\tau}\left(f_{h}^{(s)}\right)$ has the same mass, momentum and energy as $f_{h}^{(s)}$.
    \item $\Phi_{\tau}\left(f_{h}^{(s)}\right)$ preserves positivity, and has lower entropy than $f_{h}^{(s)}$.
\end{itemize}

To verify the above properties numerically, let the initial condition be
\begin{equation} \label{eq:twoG3d}
    f_{0}(\mathbf{v})=\exp(-\frac{|\mathbf{v}-\mathbf{v}_{1}|^2}{2}) + \exp(-\frac{|\mathbf{v}-\mathbf{v}_{2}|^2}{2})
\end{equation}
where $\mathbf{v}_{1} = (2, 0, 0)$ and $\mathbf{v}_{2} = (0, -2, 0)$. We solve the Boltzmann equation with $\gamma = 1$ and $\varepsilon = 0.001$ using both the standard and the relaxed forward Euler method. The results are shown in Figure \ref{fig:AP}. It can be observed that standard forward Euler becomes unstable, while the relaxed forward Euler (the consecutive LP flow) is stable.

\begin{figure}[H]
    \centering
    \begin{subfigure}[b]{0.3\textwidth}
        \centering
        \includegraphics[width=\textwidth]{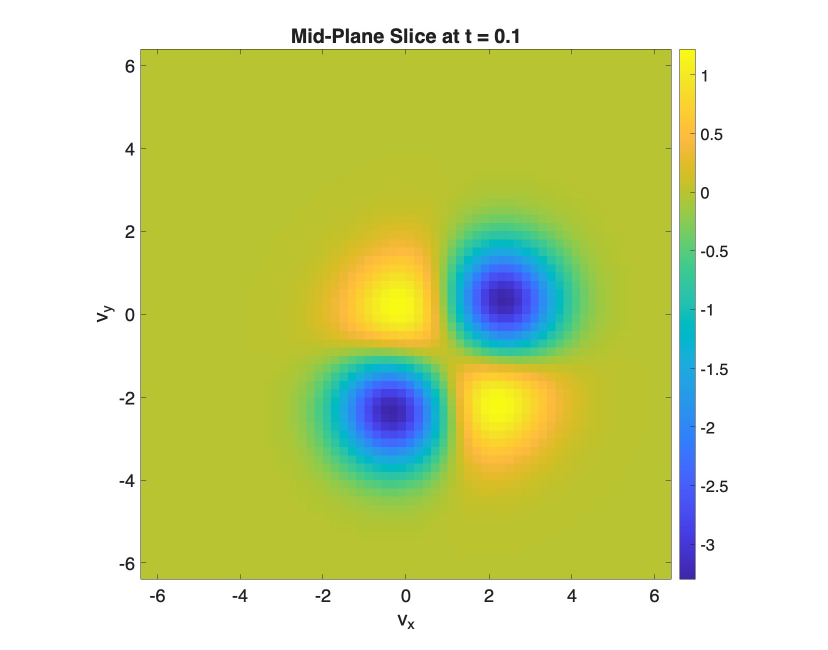}
        \caption{standard method, at $t = 0.1$.}
    \end{subfigure}
    \hfill
    \begin{subfigure}[b]{0.3\textwidth}
        \centering
        \includegraphics[width=\textwidth]{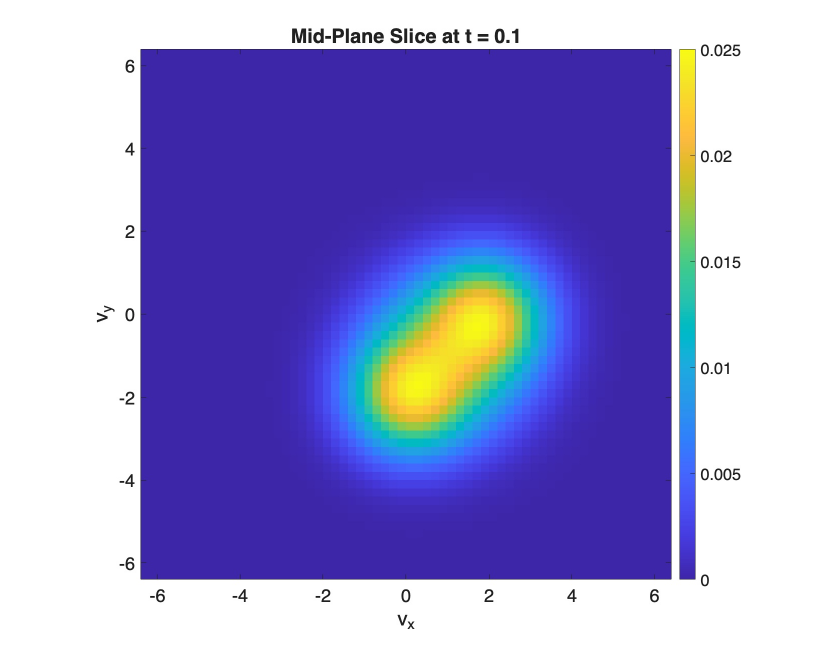}
        \caption{relaxed method, at $t=0.1$.}
    \end{subfigure}
    \hfill
    \begin{subfigure}[b]{0.3\textwidth}
        \centering
        \includegraphics[width=\textwidth]{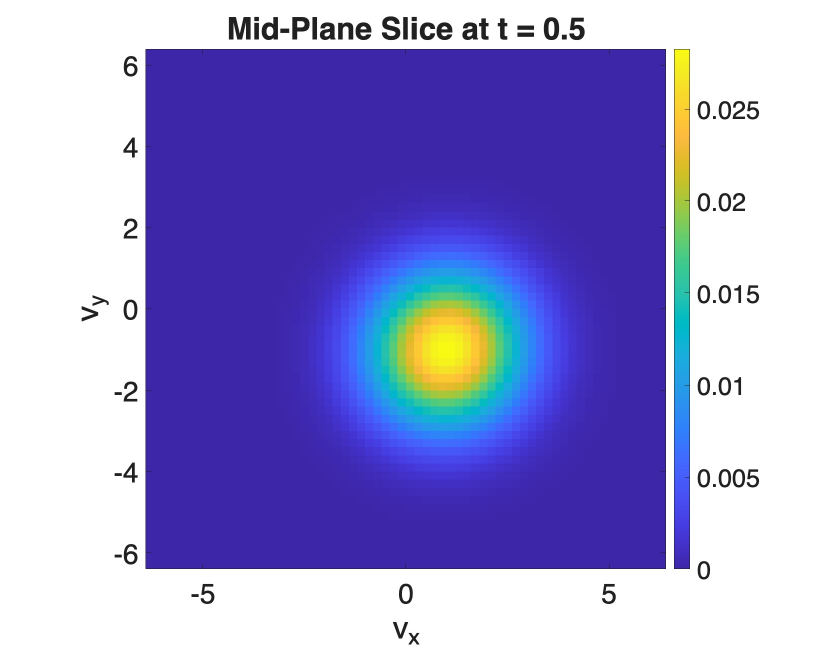}
        \caption{relaxed method, at $t=0.5$.}
    \end{subfigure}
    \begin{subfigure}[b]{0.45\textwidth}
        \centering
        \includegraphics[width=\textwidth]{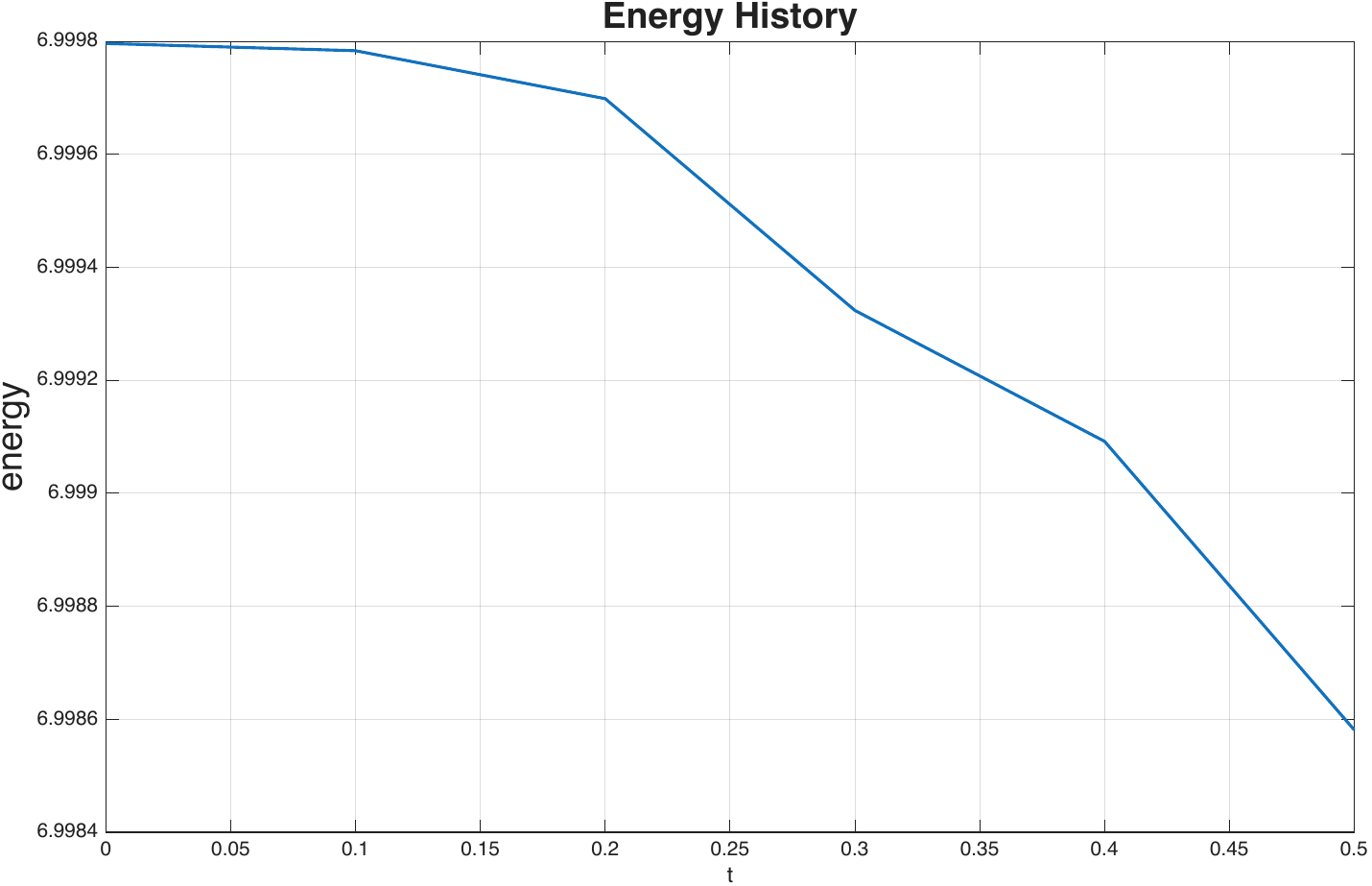}
        \caption{Energy evolution, relaxed method}
    \end{subfigure}
    \hfill
    \begin{subfigure}[b]{0.45\textwidth}
        \centering
        \includegraphics[width=\textwidth]{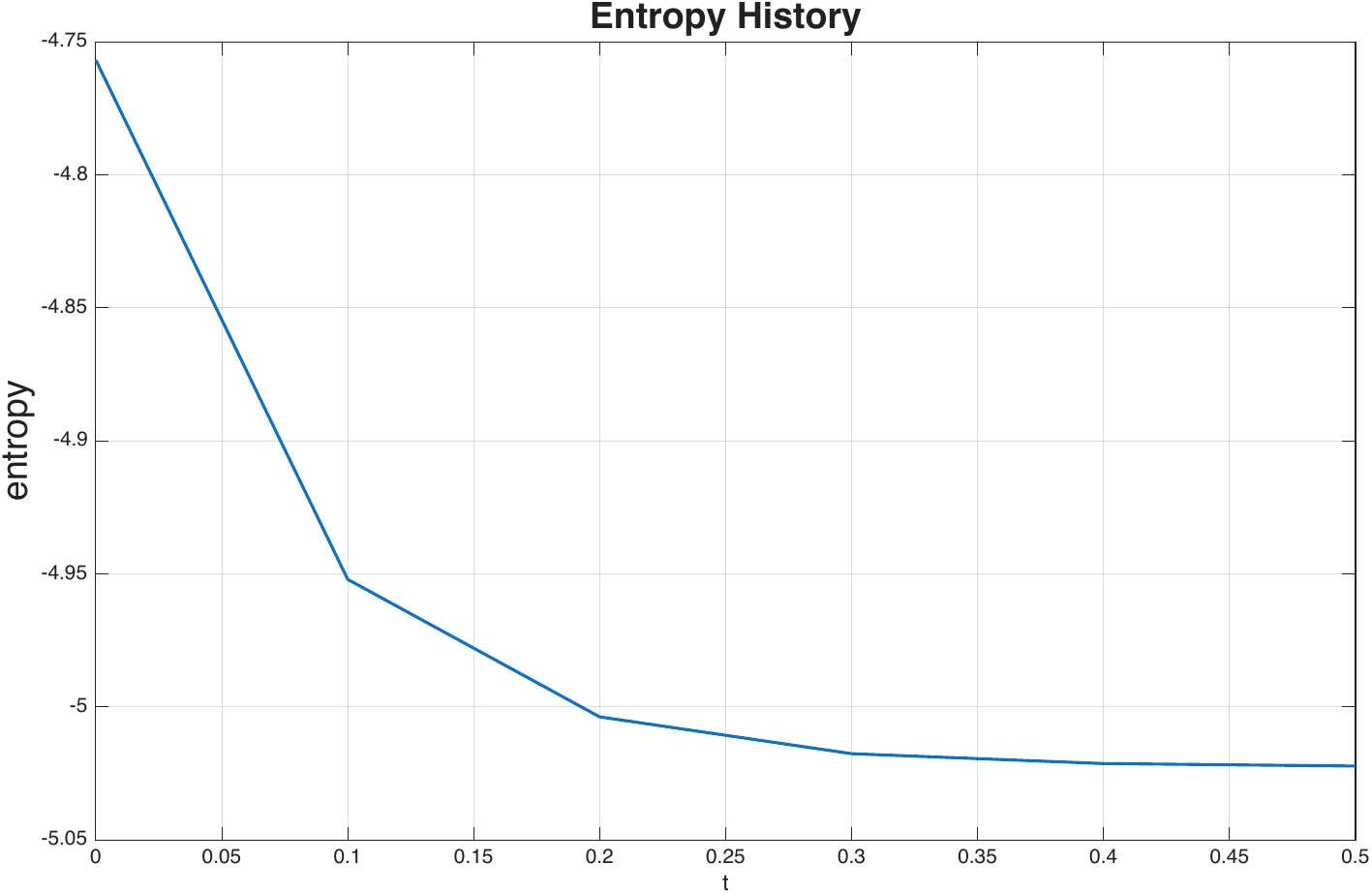}
        \caption{Entropy evolution, relaxed method}
    \end{subfigure}
    \caption{Numerical results with $(h,\Delta t) = (0.2, 0.1)$. The solutions are sliced at $v_{z} = 0$.}
    \label{fig:AP}
\end{figure}

The relaxed forward Euler method can be a building block of novel asymptotic-preserving time discretization schemes for the spatially inhomogeneous Boltzmann equation, which is an interesting future direction to explore.

\section{Conclusion}\label{sec:conclusion}
We have proposed the consecutive lifting–projection flow as a new approximation framework for the Boltzmann and Landau equations. By lifting the nonlinear collision dynamics to a linear master equation in a higher-dimensional phase space and projecting back to the original velocity variables, the LP flow reveals a hidden linear structure underlying these classical kinetic models.

The LP framework possesses a lot of favorable properties. It preserves the fundamental conservation laws of mass, momentum, and energy, satisfies an entropy dissipation principle, and converges to the correct Maxwellian equilibrium. For Maxwell molecules, we established a quantitative $L^1$ error bound for the consecutive LP flow, demonstrating that the global approximation error is first order. The lifted formulation also admits explicit semigroup representations, which enable both analytical insight and practical numerical constructions.

From a numerical perspective, the lifting–projection viewpoint unifies a broad class of existing deterministic schemes. Any method that can be interpreted as solving a lifted linear Kac-type equation followed by projection naturally fits into this framework. This reinterpretation provides a new lens for stability analysis. Moreover, the LP framework clarifies the role of positivity, showing that when positivity conflicts with exact conservation, stability may still be retained without enforcing pointwise non-negativity.

The framework also opens new directions for method development. We emphasize that the backward Euler and Green’s function methods discussed in this work form a non-exhaustive list of possible numerical schemes, and further approaches can be constructed.

\bibliographystyle{plain}
\bibliography{main.bib}
\end{document}